\numberwithin{equation}{section}
\newtheorem{Theorem}{Theorem}[section]
\newtheorem{Corollary}[Theorem]{Corollary}
\newtheorem{Lemma}[Theorem]{Lemma}
\newtheorem{Proposition}[Theorem]{Proposition}
 { \theoremstyle{definition}
\newtheorem{Example}[Theorem]{Example}
\newtheorem{Remark}[Theorem]{Remark} }
\newcommand{\N}{{\mathbb Z}_{> 0} }
\newcommand{\Zp}{{\mathbb Z}_{\geq 0} }
\newcommand{\Z}{\mathbb Z}
\newcommand{\C}{\mathbb C}
\newcommand{\HVir}{\mathcal H}
\DeclareMathOperator{\Ker}{Ker}
\DeclareMathOperator{\ch}{char}
\begin{document}

\allowdisplaybreaks

\newcommand{\arXivNumber}{1605.08608}

\renewcommand{\PaperNumber}{113}

\FirstPageHeading

\ShortArticleName{On Free Field Realizations of $W(2,2)$-Modules}

\ArticleName{On Free Field Realizations of $\boldsymbol{W(2,2)}$-Modules}

\Author{Dra\v{z}en ADAMOVI\'C~$^\dag$ and Gordan RADOBOLJA~$^\ddag$}

\AuthorNameForHeading{D.~Adamovi\'c and G.~Radobolja}

\Address{$^\dag$~Department of Mathematics, University of Zagreb, Bijeni\v{c}ka 30, 10 000 Zagreb, Croatia}
\EmailD{\href{mailto:adamovic@math.hr}{adamovic@math.hr}}
\URLaddressD{\url{https://web.math.pmf.unizg.hr/~adamovic/}}

\Address{$^\ddag$~Faculty of Science, University of Split, Rudera Bo\v{s}kovi\'{c}a 33, 21 000 Split, Croatia}
\EmailD{\href{mailto:gordan@pmfst.hr}{gordan@pmfst.hr}}

\ArticleDates{Received June 09, 2016, in f\/inal form December 03, 2016; Published online December 06, 2016}

\Abstract{The aim of the paper is to study modules for the twisted Heisenberg--Virasoro algebra $\mathcal H$ at level zero as modules for the $W(2,2)$-algebra by using construction from~[\textit{J.~Pure Appl.\ Algebra} \textbf{219} (2015), 4322--4342, arXiv:1405.1707]. We prove that the irreducible highest weight ${\mathcal H}$-module is irreducible as $W(2,2)$-module if and only if it has a typical highest weight. Finally, we construct a screening operator acting on the Heisenberg--Virasoro vertex algebra whose kernel is exactly $W(2,2)$ vertex algebra.}

\Keywords{Heisenberg--Virasoro Lie algebra; vertex algebra; $W(2,2)$ algebra; screening-ope\-rators}

\Classification{17B69; 17B67; 17B68; 81R10}

\section{Introduction}

\looseness=1 Lie algebra $W(2,2)$ was f\/irst introduced by W.~Zhang and C.~Dong in \cite{Zhang-Dong} as part of a classif\/ication of certain simple vertex operator algebras. Its representation theory has been studied in \cite{JP,JZ,LZ1, R-JMP} and several other papers. Although $W(2,2)$ is an extension of the Virasoro algebra, its representation theory is very dif\/ferent. This is most notable with highest weight representations. It was shown in~\cite{R-JMP} that some Verma modules contain a cosingular vector.

Highest weight representation theory of the twisted Heisenberg--Virasoro Lie algebra has also been studied recently. Representations with nontrivial action of $C_{I}$ have been developed in~\cite{Arbarello}. Representations at level zero, i.e., with trivial action of $C_{I}$ were studied in \cite{Billig} due to their importance in some constructions over the toroidal Lie algebras (see \cite{Billig3, Billig2}). In this case, a free f\/ield realization of highest weight modules along with the fusion rules for a suitable category of modules were obtained in~\cite{nas}.

Irreducible highest weight modules of highest weights $(0,0)$ over these algebras carry the structure of simple vertex operator algebras. Let us denote these vertex operator algebras as $L^{W(2,2)} (c_L, c_W)$ and $L^{\HVir} (c_L, c_{L,I} )$. It was proved in~\cite{nas} that simple vertex operator algebra $L^{W(2,2)} (c_L, c_W)$ embeds into Heisenberg--Virasoro vertex operator algebra $L^{\HVir} (c_L, c_{L,I} )$ so that $c_W = -24 c_{L,I} ^2$. As a result each highest weight module over $\HVir$ is also a $W(2,2)$-module. In this paper we shall completely describe the structure of the irreducible highest weight $\HVir$-modules as $W(2,2)$-modules. We show that in generic case the resulting $W(2,2)$-module is irreducible. However, in case of a module of highest weight such that associated Verma module over $W(2,2)$ contains cosingular vectors (we shall call this kind of weight atypical), irreducible $\HVir$-module is reducible over $W(2,2)$.
We shall denote the irreducible highest weight $ \HVir $-module $L^{\HVir}(c_L,0,c_{L,I},h,h_I) $ shortly as $L^{\HVir}(h,h_I)$.
 We also use the following notation\footnote{We emphasise a term $\frac{c_L-2}{24} $ for its importance in a free f\/ield realization of $\HVir $ (see~\cite{nas} for details).}
\begin{gather*}
h_{p,r}=\big(1-p^{2}\big)\frac{c_{L}-2}{24}+p(p-1)+p\frac{1-r}{2} 
\end{gather*}
for $ p,r \in \N $. Def\/ine
\begin{gather*}
{\mathcal {AT} }_{\HVir} ( c_L, c_{L,I} ) = \{ (h_{p,r},(1\pm p)c_{L,I}) \,|\, p,r \in \N \}.
\end{gather*}
We call a weight $(h,h_I)$ \textit{atypical} for $\HVir $ (resp.\ \textit{typical}) if $(h,h_I) \in {\mathcal {AT} } _{\HVir} ( c_L, c_{L,I} ) $ (resp.\ $(h,h_I) \notin {\mathcal {AT} } _{\HVir} ( c_L, c_{L,I} ) $). We shall refer to a highest weight module over $\HVir$ as (a)typical if its highest weight is (a)typical for~$\HVir$.

The next theorem gives a main result of the paper.

\begin{Theorem}\label{main}
Assume that $c_{L,I}\neq0$.
\begin{enumerate}\itemsep=0pt
\item[$(1)$] $L^{\HVir}(h,h_I) $ is irreducible as a $W(2,2)$-module if and only if
\begin{gather*}
(h,h_I) \notin {\mathcal {AT} }_{\HVir} ( c_L, c_{L,I} ) .
\end{gather*}
\item[$(2)$] If $(h,h_I) \in {\mathcal {AT} }_{\HVir} ( c_L, c_{L,I} ) $ then $L^{\HVir}(h,h_I) $ is a non-split extension of two irreducible highest weight $W(2,2)$-modules.
\end{enumerate}
\end{Theorem}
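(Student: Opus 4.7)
The strategy is to exploit the free field realization from~\cite{nas}, which realizes $L^{\HVir}(h,h_I)$ inside a rank-two Heisenberg Fock space in which $L(z)$, $I(z)$, and the $W(2,2)$ subfield $W(z)$ all admit explicit polynomial expressions in the Heisenberg modes. Under this realization the $\HVir$ highest weight vector $v_{h,h_I}$ is automatically a $W(2,2)$-highest weight vector of some weight $(h,h^{W})$, with $h^{W}$ a polynomial in $h_I$ and $c_{L,I}$ (consistent with $c_{W}=-24c_{L,I}^{2}$). The theorem is thus reduced to controlling the $W(2,2)$-singular vectors of $L^{\HVir}(h,h_I)$: either $v_{h,h_I}$ is the only one (up to scalar) and the module is $W(2,2)$-irreducible, or there are additional ones coming from the cosingular vector phenomenon of~\cite{R-JMP}.

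\textbf{Typical case ($\Leftarrow$).} Assuming $(h,h_I)\notin\mathcal{AT}_{\HVir}(c_{L},c_{L,I})$, I would show that $U(W(2,2))\cdot v_{h,h_I}=L^{\HVir}(h,h_I)$, equivalently that no nonzero $W(2,2)$-singular vector lies outside $\C\cdot v_{h,h_I}$. Using the nondegenerate contravariant form on $L^{\HVir}(h,h_I)$ coming from its simplicity over $\HVir$, this translates into a level-by-level rank statement about the pairing restricted to $W(2,2)$-descendants. The existence of a spurious $W(2,2)$-singular vector would force polynomial identities in $(h,h_I,c_{L},c_{L,I})$ whose zero locus is precisely~$\mathcal{AT}_{\HVir}(c_{L},c_{L,I})$; these are ruled out by the typicality hypothesis.

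\textbf{Atypical case ($\Rightarrow$ and part~(2)).} When $(h,h_I)=(h_{p,r},(1\pm p)c_{L,I})$, the values $h_{p,r}$ are exactly those at which $W(2,2)$-Verma modules carry cosingular vectors as in~\cite{R-JMP}, while the two charges $(1\pm p)c_{L,I}$ correspond to the two branches where the atypical line crosses the highest weight of $L^{\HVir}(h,h_I)$. Using the screening operator constructed in the last section of the paper as the natural intertwiner between $\HVir$- and $W(2,2)$-modules, I would write down explicitly a nontrivial $W(2,2)$-singular vector $u\in L^{\HVir}(h,h_I)$ and put $N=U(W(2,2))\cdot u$. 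A direct check on the resulting highest weights (which should be typical for $W(2,2)$) shows that $N$ and $L^{\HVir}(h,h_I)/N$ are irreducible highest weight $W(2,2)$-modules. Non-splitness then follows because a $W(2,2)$-direct sum decomposition, combined with compatibility under the $L(0)$-grading and the action of $I(-n)$, would extend to an $\HVir$-decomposition, contradicting the simplicity of $L^{\HVir}(h,h_I)$ as an $\HVir$-module.

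\textbf{Main obstacle.} The principal difficulty is the explicit construction and verification of the atypical singular vector~$u$. Since $W(2,2)$ is not generated by the Virasoro field alone, the cosingular vectors of its Verma modules admit no clean closed form; locating the correct $u$ inside the Fock realization and verifying $L(n)u=W(n)u=0$ for all $n>0$ is expected to rely essentially on the screening operator. A secondary difficulty is ruling out additional $W(2,2)$-singular vectors, so that the composition length is exactly two; this should follow from a Shapovalov-type determinant computation adapted to the $W(2,2)$-setting, but keeping track of both the Virasoro and $W$-modes at every level is the bookkeeping burden of the argument.
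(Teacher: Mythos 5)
Your overall frame -- push everything through the embedding $\Psi\colon L^{W(2,2)}(c_L,c_W)\hookrightarrow L^{\HVir}(c_L,c_{L,I})$ of \cite{nas} and reduce the theorem to locating $W(2,2)$-singular vectors in $L^{\HVir}(h,h_I)$ -- is the right one and matches the paper. But the execution has three genuine gaps. First, your plan for the atypical case is circular: the screening operator $S_1$ of Section~\ref{screenings} is built \emph{from} the $W(2,2)$-module structure of $L^{\HVir}(c_L,c_{L,I})$ (it needs Example~\ref{verma_prim}, i.e., the case $(h,h_I)=(0,0)$ of this very theorem), and it is only defined on the vacuum module, not on a general $L^{\HVir}(h,h_I)$. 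The paper needs no explicit formula for the extra singular vector: it takes the known cosingular vector $u_{rp}\in V^{W(2,2)}(h_{p,r},h_W)$ of \cite{R-JMP}, uses that its PBW expansion contains the component $L(-p)^r v_{h,h_W}$ (Remark~\ref{pbw}), and invokes Billig's description of the maximal submodule of $V^{\HVir}$ (Remark~\ref{kompon}: no element of it has a nonzero pure-$L$ PBW component) to conclude that $\Psi(u_{rp})$ survives in the irreducible quotient; hence $L^{\HVir}(h_{p,r},(1+p)c_{L,I})\cong\widetilde L^{W(2,2)}(h_{p,r},h_W)$ and the exact sequence \eqref{seq} does the rest. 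Second, you never address that $\Psi$ fails to be an isomorphism of Verma modules when $h_I=(1-p)c_{L,I}$; the paper handles that entire branch by contragredient duality ($L^{\HVir}(h,h_I)^*\cong L^{\HVir}(h,-h_I+2c_{L,I})$ while $L^{W(2,2)}(h,h_W)$ is self-dual, Lemma~\ref{duali}), a step absent from your plan. Your proposed Shapovalov-type determinant for the typical case would also have to separate genuine new $W(2,2)$-singular vectors from the $\HVir$-singular ones already killed in the quotient (reducibility of $V^{\HVir}$ occurs on the larger locus $h_I=(1\pm p)c_{L,I}$, not only on $\mathcal{AT}_{\HVir}$), and you do not carry this out; the paper avoids it entirely via Lemma~\ref{psi}.

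Third, your non-splitness argument does not work as stated: a $W(2,2)$-direct-sum decomposition of $L^{\HVir}(h,h_I)$ has no reason to be stable under the $I(n)$, so it would not ``extend to an $\HVir$-decomposition'' and no contradiction with $\HVir$-simplicity arises. (Indeed $I(-1)v_{0,0}\notin U(W(2,2))v_{0,0}$ inside $L^{\HVir}(0,0)$, so $W(2,2)$-submodules are typically not $\HVir$-submodules.) The correct argument is that in the $(1+p)$ branch $L^{\HVir}(h_{p,r},h_I)$ is a cyclic $W(2,2)$-module generated by its lowest-weight vector (being a quotient of $V^{W(2,2)}$ via $\Psi$), so a splitting would force that vector to generate only one summand; the $(1-p)$ branch follows by dualizing. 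Your side remark that the highest weights of the two composition factors are typical for $W(2,2)$ is also only half right: $(h_{p,r}+rp,h_W)$ is typical by Proposition~\ref{svi cosing}, but $(h_{p,r},h_W)$ is atypical -- its irreducible quotient is nevertheless what appears in the sequence.
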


We recall some aspects of representation theories of inf\/inite-dimensional Lie algebras $\HVir$ and~$W(2,2)$ in Section \ref{rep-th-review}. The main results on the branching rules will be proved in Section~\ref{result}. From the free f\/ield realization in~\cite{nas} follows that irreducible $\HVir$-modules are pairwise contragredient. For half of these modules, proofs rely on a~$W(2,2)$-homomorphism between Verma modules over~$W(2,2)$ and $\HVir$ which is induced by a homomorphism of vertex operator algebras. The rest is then proved elegantly by passing to contragredients. We also prove a very interesting result that the Verma module for $\HVir$ with typical highest weight is an inf\/inite direct sum of irreducible $W(2,2)$-modules (cf.\ Theorem~\ref{t37}). This result presents a~$W(2,2)$-analogue of certain Feigin--Fuchs modules for the Virasoro algebra (cf.\ Remark~\ref{ff}).

From the results in the paper, we see that the vertex algebra $L ^{W(2,2)} (c_L, c_W )$ has many properties similar to the $\mathcal W$-algebras appearing in logarithmic conformal f\/ield theory (LCFT):
\begin{itemize}\itemsep=0pt
\item $L ^{W(2,2)} (c_L, c_W )$ admits a free f\/ield realization inside of the Heisenberg--Virasoro vertex algebra $L^{\HVir} (c_L, c_{L,I})$.
\item Typical modules are realized as irreducible modules for $L^{\HVir} (c_L, c_{L,I})$.
\item In the atypical case, irreducible $L^{\HVir} (c_L, c_{L,I})$-modules as $L ^{W(2,2)} (c_L, c_W )$-modules have semi--simple rank two.
 \end{itemize}
The singlet vertex algebra $\overline{M(1)}$ has similar properties. $\overline{M(1)}$ is realized as kernel of a screening operator inside the Heisenberg vertex algebra $M(1)$ (cf.~\cite{A1}). In Section~\ref{screenings} we construct the screening operator
\begin{gather*}
S _1\colon \ L^ {\HVir} (c_L, c_{L,I} ) \rightarrow L^{\HVir} (1,0),
\end{gather*}
which commutes with the action of $W(2,2)$-algebra such that
\begin{gather*}
\Ker_ { L^{\HVir} (c_L,c_{L,I}) } S_1 \cong L ^{W(2,2)} (c_L, c_W ) .
\end{gather*}
Our construction uses an extension $\mathcal V_{\rm ext}$ of the vertex algebra $L^ {\HVir} (c_L, c_{L,I} )$ by a non-weight module for the Heisenberg--Virasoro vertex algebra. In our forthcoming paper~\cite{AR2}, we shall present an explicit realization of $\mathcal V_{\rm ext}$ and apply this construction to the study of intertwining operators and logarithmic modules.

\section[Lie algebra $W(2,2)$ and the twisted Heisenberg--Virasoro Lie algebra at level zero]{Lie algebra $\boldsymbol{W(2,2)}$ and the twisted Heisenberg--Virasoro\\ Lie algebra at level zero}\label{rep-th-review}
$W(2,2)$ is a Lie algebra with basis $\{L(n), W(n),C_{L},C_{W}\colon n\in\mathbb{Z\}}$ over $\C$, and a Lie bracket
\begin{gather*}
[ L(n) ,L(m) ] =(n-m) L(n+m) +\delta_{n,-m}\frac{n^{3}-n}{12}C_{L}, \\ 
[ L(n) ,W(m) ] =(n-m)W(n+m) +\delta_{n,-m}\frac{n^{3}-n}{12}C_{W}, \\
[ W(n) ,W(m) ] =[ \cdot,C_{L}] =[ \cdot,C_{W}] =0.
\end{gather*}

Highest weight representation theory over $W(2,2)$ was studied in~\cite{JP,R-JMP}. However, representations treated in these papers have
equal central charges $C_{L}=C_{W}$. These results have recently been generalised to $C_{L}\neq C_{W}$ in~\cite{JZ}. Here we state the most important results. Verma module with central charge $(c_{L},c_{W}) $ and highest weight $(h,h_{W}) $ is denoted by $V^{W(2,2)}( c_{L},c_{W},h,h_{W})$, its highest weight vector
by $v_{h,h_W}$ and irreducible quotient module by $L^{W(2,2)} (c_{L},c_{W},h,h_{W})$.

Recall the def\/inition of a cosingular vector. Homogeneous vector $v\in M$ is called cosingular (or subsingular) if it is not singular in $M$ and if there
is a proper submodule $N \subset M$ such that $v+N$ is a singular vector in~$M/N$.

\begin{Theorem}[{\cite{JZ, R-JMP}}]\label{W-struktura}Let $c_{W}\neq0$.
\begin{enumerate}\itemsep=0pt
\item[$(i)$]Verma module $V^{W(2,2)}(c_{L},c_{W},h,h_{W}) $ is reducible if and only if $h_{W}=\frac{1-p^{2}}{24}c_{W}$ for some $p\in \N$. In that case, there exists a singular vector $u_p^{\prime}\in \C [ W(-1) ,\ldots,W(-p)]v_{h,h_W} $ such that $U(W(2,2)) u_p^{\prime} \cong V^{W(2,2)}(c_{L},c_{W},h+p,h_{W}) $.

\item[$(ii)$]A quotient module\footnote{This module is denoted by $L^{\prime}$ in \cite{JZ,R-JMP}. We
change notation to $\widetilde{L}$ due to use of superscript $W(2,2)$.}
\begin{gather*}V^{W(2,2)}(c_{L},c_{W},h,h_{W}) / U(W(2,2)) u_p^{\prime} =: \widetilde{L}^{W(2,2)}( c_{L},c_{W},h_{p,r},h_{W})
\end{gather*}
is reducible if and only if $ h = h_{p,r} $ for some $r\in\N$. In that case, there is a cosingular vector $%
u_{rp}\in V^{W(2,2)}(c_{L},c_{W},h,h_{W}) _{h+rp}$ such that $\overline{u_{rp}} := u_{rp} + U(W(2,2)) u_p^{\prime}$ is a~singular vector in $\widetilde{L}^{W(2,2)}( c_{L},c_{W},h_{p,r},h_{W})$ which generates a submodule isomorphic to $L^{W(2,2)}( c_{L},c_{W},h_{p,r}+rp,h_{W})$. The short sequence
\begin{align}
0 &\rightarrow L^{W(2,2)}(c_{L},c_{W},h_{p,r}+rp,h_{W}) \rightarrow\widetilde{L}^{W(2,2)} ( c_{L},c_{W},h_{p,r},h_{W} ) \nonumber \\
&\rightarrow L^{W(2,2)} ( c_{L},c_{W},h_{p,r},h_{W} ) \rightarrow 0, \label{seq}
\end{align}
where the highest weight vector in $L^{W(2,2)}(c_{L},c_{W},h_{p,r}+rp,h_{W})$ maps to $\overline{u_{rp}}$ is exact.
\end{enumerate}
\end{Theorem}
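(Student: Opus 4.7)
The plan is to exploit the semidirect product structure of $W(2,2)$: the modes $W(n)$, $C_W$ span an abelian ideal $\mathfrak a$, while the Virasoro subalgebra $\mathrm{span}\{L(n), C_L\}$ acts on it. Let $V = V^{W(2,2)}(c_L, c_W, h, h_W)$, and put $V_W := U(\mathfrak a^-) v_{h,h_W} \cong \C[W(-1), W(-2), \ldots] v_{h,h_W}$. Because the $W(n)$ commute, $W(n) V_W = 0$ for $n > 0$ automatically; hence a vector $u \in V_W$ is singular in $V$ if and only if $L(n) u = 0$ for all $n \geq 1$. This reduces part~(i) to a question about Virasoro acting on a commutative polynomial algebra.

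For part~(i), I would use $[L(n), W(-k)] = (n+k) W(n-k) + \delta_{n,k}\frac{n^{3}-n}{12}C_{W}$ to see that, for $n \geq 1$, $L(n)$ acts on $V_W$ as a derivation satisfying
\begin{gather*}
L(n) W(-k) v_{h,h_W} = (n+k) W(n-k) v_{h,h_W} + \delta_{n,k} \frac{n^{3}-n}{12} c_W v_{h,h_W}.
\end{gather*}
A polynomial $P$ of weighted degree $p$ (with $W(-k)$ of weight $k$) is singular iff $L(1) P = \cdots = L(p) P = 0$; note that every $W(-k)$ appearing in $P$ satisfies $k \leq p$, justifying the membership $u_p' \in \C[W(-1),\ldots,W(-p)] v_{h,h_W}$. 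The key observation is that $L(p)$ annihilates every weighted-degree-$p$ monomial except $W(-p) v_{h,h_W}$, on which it produces $(2 p h_W + \frac{p^{3}-p}{12} c_W) v_{h,h_W}$. An inductive analysis from $L(p)$ down to $L(1)$ shows that a non-trivial solution exists if and only if $h_W = (1-p^2) c_W/24$, and in that case is unique up to scalar. Since $u_p'$ has the same $W(0)$-weight $h_W$, its $U(W(2,2))$-descendants span a copy of $V^{W(2,2)}(c_L, c_W, h+p, h_W)$.

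For part~(ii), I would look for a cosingular vector $u_{rp}$ at weight $h + rp$ in $V$. No new ``pure $W$'' singular vector can appear in $\widetilde L^{W(2,2)}$, since such a vector would already lie in $V_W$ and hence in $U(W(2,2)) u_p'$. The strategy is to examine the sub-Verma module $N := U(W(2,2)) u_p' \cong V^{W(2,2)}(c_L, c_W, h+p, h_W)$ and locate a singular vector inside $N$ at relative level $rp$, whose preimage in $V$, corrected modulo $N$, is $u_{rp}$. Matching the resulting Virasoro Kac-type condition on $N$ against $h = h_{p,r}$ gives the reducibility criterion. To rule out any further singular vectors in $\widetilde L^{W(2,2)}$, one uses the Shapovalov determinant of $W(2,2)$: by the abelian-ideal structure it factorises into a ``$W$-part'' (accounted for by $u_p'$) and a ``Virasoro-part'' (giving exactly the loci $h = h_{p,r}$). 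Exactness of~(\ref{seq}) is then routine: the kernel of $\widetilde L^{W(2,2)} \to L^{W(2,2)}$ is generated by $\overline{u_{rp}}$ and identifies with $L^{W(2,2)}(c_L, c_W, h_{p,r}+rp, h_W)$.

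The main obstacle is part~(ii). Singular vectors live in the clean commutative subspace $V_W$, but cosingular vectors intrinsically mix $L$- and $W$-modes and are defined only through a quotient, so no closed formula analogous to $u_p'$ is available. The technical work concentrates on controlling the degeneracy of the Shapovalov form at the ``second layer'' of reducibility, beyond $u_p'$, and showing that this degeneracy is governed precisely by the Virasoro Kac-type conditions $h = h_{p,r}$.
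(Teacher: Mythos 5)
The paper does not actually prove Theorem~\ref{W-struktura}; it imports it from \cite{JZ,R-JMP}, so your proposal has to be judged against the arguments in those references. For part~(i) your mechanism is the right one and matches theirs: on $V_W=\C[W(-1),W(-2),\dots]v_{h,h_W}$ every $W(n)$, $n>0$, acts as zero, $L(p)$ kills every weight-$p$ monomial except $W(-p)v_{h,h_W}$, where it produces $\big(2ph_W+\frac{p^3-p}{12}c_W\big)v_{h,h_W}$, and this vanishes exactly when $h_W=\frac{1-p^2}{24}c_W$. What is missing is the ``only if'' half of the reducibility criterion: you must show that \emph{every} singular vector of $V^{W(2,2)}(c_L,c_W,h,h_W)$ already lies in $V_W$ (e.g., by filtering PBW monomials by the number of $L$-modes, or via the determinant formula); you assert this only in the closing paragraph without argument.

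Part~(ii) has two genuine gaps. First, the proposed source of the cosingular vector is not coherent: the submodule $N=U(W(2,2))u_p'\cong V^{W(2,2)}(c_L,c_W,h+p,h_W)$ has singular vectors at relative levels $p,2p,\dots$ for \emph{every} $h$; these are already singular in $V$ and bear no relation to cosingularity, and a cosingular vector is by definition an element of $V\setminus N$ with $W(2,2)^{+}u_{rp}\subseteq N$, not a ``preimage'' of anything inside $N$. The references instead construct $u_{rp}$ directly as a vector whose only component in $\C[L(-1),L(-2),\dots]v_{h,h_W}$ is $L(-p)^r v_{h,h_W}$ (cf.\ Remark~\ref{pbw}); the condition $h=h_{p,r}$ is the solvability condition of the resulting linear system for the correction terms. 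Second, your claimed factorisation of the Shapovalov determinant into a ``$W$-part'' and a ``Virasoro-part with zeros at $h=h_{p,r}$'' contradicts your own part~(i): an $h$-dependent factor would force reducibility of the Verma module at $h=h_{p,r}$ for arbitrary $h_W$. In fact the determinant depends only on $h_W$ (already at level one it equals $-4h_W^2$, with no $h$), and this is precisely \emph{why} cosingular vectors occur: the degeneration at $h=h_{p,r}$ is invisible to the determinant and must be detected by a second-layer (Jantzen-filtration) analysis or by the explicit construction of $u_{rp}$. As written, both the existence of $u_{rp}$ exactly when $h=h_{p,r}$ and the completeness claim needed for exactness of~(\ref{seq}) (no further singular vectors in $\widetilde{L}^{W(2,2)}$, and irreducibility of $\langle\overline{u_{rp}}\rangle$) rest on this false premise.
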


Def\/ine
\begin{gather*}
{\mathcal {AT} }_{W(2,2) } ( c_L, c_{W} ) = \left\{ \left(h_{p,r}, \frac{1-p^2}{24}c_W\right) |\, p,r \in \N \right\}.
\end{gather*}
\begin{Remark}
We will refer to the (modules of) highest weights $(h, h_W) \in {\mathcal {AT} }_{W(2,2) } ( c_L, c_{W} ) $ as \textit{atypical} for $W(2,2)$, and otherwise as \textit{typical}. Again, we refer to a highest weight $W(2,2)$-module as (a)typical depending on its highest weight. So a Verma module over $W(2,2)$ contains a nontrivial cosingular vector if and only if it is atypical.
\end{Remark}

\begin{Proposition} \label{svi cosing}
Let $h_W=\frac{1-p^2}{24}c_W$, $p\in\N$.
\begin{enumerate}\itemsep=0pt
\item[$(i)$]Let $(h_{p,r},h_W)$, $r\in\N$ be an atypical weight and $k\in\Z$. Then $(h_{p,r}+kp,h_W)$ is atypical if and only if $k<\frac{r}{2}$.
\item[$(ii)$]Atypical Verma module $V^{W(2,2)}(h_{p,r},h_W)$ contains exactly $\lfloor \frac{r+1}{2} \rfloor$ cosingular vectors. The weights of these vectors are $h_{p,r}+(r-i)p = h_{p,-r+2i}$, $i=0,\ldots,\lfloor \frac{r-1}{2} \rfloor$.
\end{enumerate}
\end{Proposition}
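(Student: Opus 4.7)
Part~(i) is a direct computation from the formula defining $h_{p,r}$. Since $c_W\neq 0$, the condition $\frac{1-p^2}{24}c_W=\frac{1-q^2}{24}c_W$ with $p,q\in\N$ forces $q=p$, so I only need to decide when $h_{p,r}+kp=h_{p,s}$ admits a solution $s\in\N$. Substituting the formula and canceling the terms that do not depend on $r$ or $s$, the equation reduces to $s=r-2k$, and the requirement $s\geq 1$ is equivalent, for $k\in\Z$, to $k<r/2$.

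For part~(ii) I would proceed by induction on $r$, with Theorem~\ref{W-struktura} doing the bulk of the work. The singular vector $u_p'$ produces a submodule of $V^{W(2,2)}(h_{p,r},h_W)$ isomorphic to $V^{W(2,2)}(h_{p,r-2},h_W)$, and part~(i) applied with $k=1$ shows that this submodule is atypical precisely when $r\geq 3$. For the base cases $r=1,2$ the singular submodule is therefore typical, hence irreducible and free of cosingular vectors, so the cosingular vector $u_{rp}$ at weight $h_{p,-r}$ supplied by Theorem~\ref{W-struktura}(ii) is the only cosingular vector, matching $\lfloor(r+1)/2\rfloor=1$.

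For $r\geq 3$ the inductive hypothesis provides $\lfloor(r-1)/2\rfloor$ cosingular vectors of $V^{W(2,2)}(h_{p,r-2},h_W)$ at weights $h_{p,-r+2+2j}$ for $j=0,\ldots,\lfloor(r-3)/2\rfloor$, and these remain cosingular in the ambient Verma module because singularity modulo a submodule is preserved when the ambient module is enlarged. Adjoining $u_{rp}$ at the new weight $h_{p,-r}$ then produces exactly $1+\lfloor(r-1)/2\rfloor=\lfloor(r+1)/2\rfloor$ cosingular vectors at the weights listed in the statement.

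The main obstacle is showing there are no further cosingular vectors, and for this the key point is structural. Any cosingular $v$ either lies in the singular submodule $V^{W(2,2)}(h_{p,r-2},h_W)$ (and is then covered by the inductive count) or has nonzero image in $\widetilde L^{W(2,2)}(h_{p,r},h_W)$, in which case that image must be a singular vector in a further quotient of $\widetilde L^{W(2,2)}(h_{p,r},h_W)$. The exact sequence~(\ref{seq}) identifies $\widetilde L^{W(2,2)}(h_{p,r},h_W)$ as a length-two extension whose only non-highest singular vector is $\overline{u_{rp}}$, so any such $v$ differs from $u_{rp}$ by an element of the singular submodule and contributes only the weight $h_{p,-r}$ already counted.
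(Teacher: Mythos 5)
Your argument is correct and follows essentially the same route as the paper: part~(i) is the same computation $h_{p,r}+kp=h_{p,r-2k}$, and your induction on $r$ in part~(ii) simply unrolls into the paper's direct argument, which applies Theorem~\ref{W-struktura}(ii) to each atypical link of the descending chain of Verma submodules of highest weights $h_{p,r-2i}$ and uses part~(i) to determine which links (those with $r-2i>0$) are atypical. One small correction: in your base cases the singular submodule $\langle u_p'\rangle\cong V^{W(2,2)}(h_{p,r-2},h_W)$ is typical but \emph{not} irreducible --- since $h_W=\frac{1-p^2}{24}c_W$ it still contains its own singular vector by Theorem~\ref{W-struktura}(i) --- although the only property you actually use, namely the absence of cosingular vectors in a typical Verma module, is true.
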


\begin{proof}
(i) Directly from Theorem~\ref{W-struktura} since $h_{p,r}+kp = h_{p,r-2k}$.

(ii) Follows from~(i) since $V^{W(2,2)}(h_{p,r},h_W)$ contains an inf\/inite chain of submodules isomorphic to Verma modules of highest weights $h_{p,r}+ip = h_{p,r-2i}$, $i>0$. Applying Theorem~\ref{W-struktura} to each of these submodules we obtain cosingular vectors of weights
\begin{gather*}
h_{p,r-2i}+(r-2i)p = h_{p,r}+(r-i)p = h_{p,-r+2i}
\end{gather*}
as long as $r-2i>0$.
\end{proof}

\begin{Remark}\label{pbw}
Standard PBW basis for $V^{W(2,2)}(c_{L},c_{W},h,h_{W}) $ consists of vectors
\begin{gather*}
W(-m_{s}) \cdots W(-m_{1}) L(-n_{t}) \cdots L ( -n_{1} ) v_{h,h_W}
\end{gather*}
such that $ m_{s}\geq\cdots\geq m_{1}\geq1$, $n_{t}\geq\cdots\geq n_{1}\geq1 $. The only nonzero component of $u_{rp}$ belonging to $\C [ L(-1) ,L ( -2), \ldots ] v$ is $L(-p)^{r}v_{h,h_W}$~\cite{R-JMP}.
\end{Remark}

Def\/ine $P_{2}(n) =\sum\limits_{i=0}^{n}P(n-i)P(i)$ where $P $ is a partition function with $P(0)=1$. We have the following character
formulas~\cite{R-JMP}
\begin{gather*}
\ch V^{W(2,2)}( c_{L},c_{W},h,h_{W}) =q^{h}\sum_{n\geq0}P_{2}(n)q^{n}=q^{h}\prod\limits_{k\geq1}\big(1-q^{k}\big)^{-2},
\end{gather*}
for all $h,h_W \in \C$. If $h_W = \frac{1-p^2}{24}c_W$, then
\begin{gather*}
\ch \widetilde{L}^{W(2,2)}(c_{L},c_{W},h,h_{W}) =q^{h}\big(1-q^{p}\big)\sum_{n\geq0}P_{2}(n)q^{n}
 =q^{h}\big(1-q^{p}\big)\prod\limits_{k\geq1}\big(1-q^{k}\big)^{-2}.
\end{gather*}
If $(h,h_W)$ is typical for $W(2,2)$, then this is the character of an irreducible highest weight module. Finally, the character of atypical irreducible module is
\begin{align*}
\ch L^{W(2,2)}( c_{L},c_{W},h_{p,r},h_{W})& =q^{h_{p,r}}\big(1-q^{p}\big)\big(1-q^{rp}\big)\sum_{n\geq0}P_{2}(n)q^{n} \\
& =q^{h_{p,r}}\big(1-q^{p}\big)(1-q^{rp})\prod\limits_{k\geq1}\big(1-q^{k}\big)^{-2}.
\end{align*}

The twisted Heisenberg--Virasoro algebra $\HVir$ is the universal central extension of the Lie algebra of dif\/ferential operators on a circle of order at most one. It is the inf\/inite-dimensional complex Lie algebra with a basis
\begin{gather*}
\{L(n),I(n)\colon n\in\Z\}\cup\{C_{L},C_{LI},C_{I}\}
\end{gather*}
and commutation relations
\begin{gather*}
[ L(n),L(m)] =(n-m)L(n+m)+\delta_{n,-m}\frac{n^{3}-n}{12}C_{L},\\
[ L(n),I(m)] =-mI(n+m)-\delta_{n,-m}\big(n^{2}+n\big)C_{LI}, \\
[ I(n),I(m)] =n\delta_{n,-m}C_{I}, \qquad
[ {\HVir},C_{L}] =[ {\HVir},C_{LI}] =[ \HVir,C_{I}] =0.
\end{gather*}
The Lie algebra $\HVir$ admits the following triangular decomposition
\begin{gather}
\HVir =\HVir ^{-} \oplus \HVir^0 \oplus \HVir^{+}, \label{triangular} \\
\HVir ^{\pm} = \operatorname{span}_{\mathbb C} \{ I( \pm n), L(\pm n) \,|\, n \in {\Z}_{>0}\}, \qquad
\HVir^0 = \operatorname{span}_{\mathbb C} \{ I(0), L(0), C_L, C_{L,I}, C_{I} \}. \nonumber
\end{gather}

Although they seem to be two similar extensions of the Virasoro algebra, representation theories of $W(2,2)$ and $\HVir$ are dif\/ferent. The main reason for that lies in the fact that $I(0) $ is a~central element, while $W(0) $ is not. However, applying free f\/ield realization, we shall see that highest weight modules over the two algebras are related.

Denote by $V^{\HVir} ( c_{L},c_{I},c_{L,I},h,h_{I} ) $ the Verma module and by $v_{h,h_I}$ its highest weight vector. $C_{L},$ $C_{I},$ $C_{L,I},$ $L(0) $ and $I(0) $ act on $v_{h,h_I}$ by scalars $c_{L}$, $c_{I}$, $c_{L,I}$, $h$ and $h_{I}$, respectively. Then $\left(c_{L,}c_{I},c_{L,I}\right) $ is called a central charge, and $(h,h_{I}) $ a highest weight. In this paper we consider central charges $(c_{L},0,c_{L,I}) $ such that $c_{L,I}\neq0$.

\begin{Theorem}[\cite{Billig}]\label{billig}
Let $c_{L,I}\neq0$. Verma module $V^{\HVir}(c_{L},0,c_{L,I},h,h_{I}) $ is reducible if and only if $h_{I}=(1\pm p) c_{L,I}$ for some $p\in\N$. In that case, there is a singular vector $v_p^{\pm}$ of weight $p$, which generates a maximal submodule in $V^{\HVir} ( c_{L},0,c_{L,I},h,h_{I} ) $ isomorphic to $V^{\HVir} ( c_{L},0,c_{L,I},h+p,h_{I} ) $.
\end{Theorem}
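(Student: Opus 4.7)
The plan is to study reducibility through a Gram-type matrix on each $L(0)$-weight space of the Verma module. For $p\geq 1$ denote by $V_{h+p}$ the level-$p$ subspace and by $U(\HVir^+)_p$ the weight-$p$ part of the positive subalgebra. For any $X\in U(\HVir^+)_p$ and $u\in V_{h+p}$ the product $Xu$ lies in $V_h=\C v_{h,h_I}$, so it defines a scalar $\ell_X(u)$; pairing PBW bases on both sides yields a square matrix $M_p$ of size $P_2(p)\times P_2(p)$ whose determinant vanishes precisely when $V^{\HVir}(c_L,0,c_{L,I},h,h_I)$ admits a singular vector of weight $\leq p$. Note that the naive antiinvolution $L(n)\mapsto L(-n)$, $I(n)\mapsto I(-n)$ is not consistent with the $C_{L,I}$-term in $[L(n),I(-n)]$, so we work with this asymmetric Gram matrix directly rather than a symmetric Shapovalov form.

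The entries of $M_p$ arise from recursive application of the commutation relations, with the key identities on the highest weight vector being $L(n)I(-n)v=n(h_I-(n+1)c_{L,I})v$, $I(n)L(-n)v=n(h_I+(n-1)c_{L,I})v$, $I(n)I(-n)v=0$ (since $c_I=0$), and $L(n)L(-n)v=(2nh+\tfrac{n^{3}-n}{12}c_L)v$. At level one this gives $M_1=\bigl(\begin{smallmatrix}h_I-2c_{L,I} & 2h\\ 0 & h_I\end{smallmatrix}\bigr)$ on the basis $\{I(-1)v,L(-1)v\}$, whose determinant $h_I(h_I-2c_{L,I})$ vanishes exactly at $h_I\in\{(1-1)c_{L,I},(1+1)c_{L,I}\}$; the corresponding singular vectors are $v_1^+=I(-1)v_{h,h_I}$ and $v_1^-=c_{L,I}L(-1)v_{h,h_I}+hI(-1)v_{h,h_I}$, while at level two one verifies, for example, that $v_2^+=I(-1)^2v_{h,h_I}-c_{L,I}I(-2)v_{h,h_I}$ is singular when $h_I=3c_{L,I}=(1+2)c_{L,I}$.

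For general $p$ I would prove by induction on $p$ that $\det M_p$ factors (over $\C[h,h_I,c_L,c_{L,I}]$) as a nonzero constant times a product of linear terms $h_I-(1\pm p')c_{L,I}$ for $1\leq p'\leq p$. The argument is analogous to Kac's determinant calculation for the Virasoro algebra, but is considerably simpler because of the abelian structure of the Heisenberg subalgebra at level zero; all $h$-dependent contributions arising from $L$-$L$ commutators must cancel globally, as the final list of roots depends only on $h_I$. Since $(1+p)c_{L,I}=(1-p')c_{L,I}$ forces $p+p'=0$, a new root appears at level exactly $p$ only when $h_I=(1\pm p)c_{L,I}$, and for each admissible $h_I$ the pair $(p,\pm)$ is uniquely determined, so the singular vector $v_p^{\pm}$ is unique up to scalar. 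The submodule $U(\HVir^-)v_p^{\pm}$ inherits highest weight $(h+p,h_I)$; by the freeness of $V^{\HVir}(c_L,0,c_{L,I},h,h_I)$ over $U(\HVir^-)$ (a consequence of PBW), the natural map from $V^{\HVir}(c_L,0,c_{L,I},h+p,h_I)$ is injective onto this submodule, and the uniqueness of $v_p^{\pm}$ forces this to be the maximal proper submodule. The main obstacle is the inductive factorization of $\det M_p$; an alternative route is to construct $v_p^{\pm}$ explicitly, generalizing the level-one and level-two formulas, and then use the character formula $q^h\prod_{k\geq 1}(1-q^k)^{-2}$ together with Jantzen-filtration or direct character arguments to rule out further reducibility.
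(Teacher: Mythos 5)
Note first that the paper offers no proof of this statement: it is quoted verbatim from Billig \cite{Billig}, so there is no internal argument to compare yours against. Your framework (a Shapovalov-type pairing between $U(\HVir^+)_p$ and the level-$p$ weight space) is the right one, and your low-level computations are correct, including the elementary pairings, the level-one matrix, and the singular vectors $v_1^{\pm}$ and $v_2^+=(I(-1)^2-c_{L,I}I(-2))v$ (consistent with the Schur-polynomial form mentioned in Remark~\ref{kompon} and with Example~\ref{verma_prim}). However, the write-up has two genuine gaps. The first is that the factorization of $\det M_p$ is asserted rather than proved, and the justification offered is circular: you argue that all $h$-dependent contributions ``must cancel globally, as the final list of roots depends only on $h_I$'', which presupposes the answer. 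The honest argument is available and should be carried out: since $C_I$ acts by zero, the modes $I(m)$ and $I(-n)$ commute and the $I$-against-$I$ block of the pairing vanishes; grading PBW monomials by the $I$-degree then makes $M_p$ block triangular, with $\det M_p$ reducing to powers of determinants of the pure $L$-against-$I$ pairings, whose elementary entries $L(k)I(-k)v=k(h_I-(k+1)c_{L,I})v$ and $I(k)L(-k)v=k(h_I+(k-1)c_{L,I})v$ produce exactly the roots $h_I=(1\pm k)c_{L,I}$ and no dependence on $h$ or $c_L$. You also need the multiplicities of these factors (they are not $1$ for $p'<p$), both to certify the nonvanishing leading constant and for the structural claims below.

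The second, more serious gap is the sentence ``the uniqueness of $v_p^{\pm}$ forces this to be the maximal proper submodule.'' That implication is false in general: the maximal submodule $J$ could strictly contain $\langle v_p^{\pm}\rangle_{\HVir}$ if the Verma module possessed a cosingular vector, i.e., a vector $u\notin\langle v_p^{\pm}\rangle_{\HVir}$ with $\HVir^+u\subseteq\langle v_p^{\pm}\rangle_{\HVir}$, and uniqueness of the singular vector at level $p$ says nothing about this. This is not a hypothetical worry here: the sibling algebra $W(2,2)$ treated in this very paper exhibits exactly this phenomenon (Theorem~\ref{W-struktura}(ii)), where the quotient by the submodule generated by the unique singular vector $u_p'$ is still reducible. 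Ruling out cosingular vectors for $\HVir$ at level zero is precisely the content that must be proved; it can be done via the determinant multiplicities combined with a Jantzen-filtration count, or via the PBW-component argument recorded in Remark~\ref{kompon} (no vector of the maximal submodule has a nonzero component in $\C[L(-1),L(-2),\dots]v$), or via the free field realization of the irreducible quotient and a character comparison. Your proposed ``alternative route'' (explicit singular vectors plus character arguments) is a viable way to close both gaps, but as written neither branch of the argument is completed.
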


\begin{Remark} \label{kompon}
In case $h_{I}=(1+p) c_{L,I}$ an explicit formula for a singular vector $v_p^+$ is obtained using Schur polynomials in $I(-1), \ldots, I(-p)$. See~\cite{nas} for details. Assume that $x \in U(W(2,2))_-$ is such that $xv_{h,h_I} \in V^{\HVir} ( c_{L},0,c_{L,I},h,h_{I} ) $ lies in a~maximal submodule. Then $x$ does not have a nontrivial additive component (in PBW basis) that belongs to $\C [ L(-1) ,L(-2), \ldots ]$~\cite{Billig}.
\end{Remark}

There is an inf\/inite chain of Verma submodules generated by singular vectors $v_{kp}^{\pm}$, $k\in\N$, with all the subquotients being irreducible. Note that there is no mention of $\widetilde{L}^{\HVir}$ since there are no cosingular vectors in $V^{\HVir}$.

The following character formulas were obtained in \cite{Billig}:
\begin{gather*}
\ch V^{\HVir}( c_{L},0,c_{L},h,h_{I}) =q^{h}\sum_{n\geq0}P_{2}(n)q^{n}=q^{h}\prod\limits_{k\geq1}\big(1-q^{k}\big)^{-2}, \\
\ch L^{\HVir} ( c_{L},0,c_{L},h,h_{I} ) =q^{h}\big(1-q^{p}\big)\sum_{n\geq0}P_{2}(n)q^{n} =q^{h}\big(1-q^{p}\big)\prod\limits_{k\geq1}\big(1-q^{k}\big)^{-2}.
\end{gather*}

\begin{Remark}
Throughout the rest of the paper we work with highest weight modules over the Lie algebras $W(2,2)$ and $\HVir$ so we always denote algebra in superscript. In order to avoid too cumbersome notation, we omit central charges. Therefore, we write $V^{\HVir}(h,h_{I}) $ for Verma module over~$\HVir$, ${V}^{W(2,2)}(h,h_{W}) $ for Verma module over $W(2,2)$ and so on. We always assume that $c_{W}$ and $c_{L,I}$ are nonzero. Moreover, if we work with several modules over both algebras, $c_{L}$ is equal for all modules.

We shall write $\langle x \rangle_{W(2,2)}$ for a cyclic submodule $U (W(2,2))x$ and $\langle x \rangle_{\HVir}$ for $U(\HVir)x$. Finally, $\cong_{W(2,2)}$ denotes an isomorphism of $W(2,2)$-modules.
\end{Remark}

\section{Irreducible highest weight modules}\label{result}
In this section we present main results of the paper which completely describe the structure of (irreducible) highest weight modules for $\HVir$ as $W(2,2)$-modules. The main tool is the homomorphism between $W(2,2)$ and the Heisenberg--Virasoro vertex algebras from~\cite{nas}.

$L^{W(2,2) }( c_{L},c_{W},0,0) $ is a simple universal vertex algebra associated to Lie algebra $W(2,2)$ (cf.\ \cite{R-JMP, Zhang-Dong}) which we denote by $L^{W(2,2)}(c_{L},c_{W})$. It is generated by f\/ields
\begin{gather*}
L(z)=Y(\omega ,z)=\sum_{n\in {\Z}}L(n)z^{-n-2},\qquad W(z)=Y(W,z)=\sum_{n\in {\Z}}W(n)z^{-n-2},
\end{gather*}
where $\omega =L(-2)\mathbf{1}$ and $W=W(-2)\mathbf{1}$. Each highest weight $W(2,2) $-module is also a module over a vertex operator algebra $L^{W(2,2) }(c_{L},c_{W})$.

Likewise (see \cite{Billig3}) $L^{\HVir}(c_{L},0,c_{L,I},0,0) $ is a simple Heisenberg--Virasoro vertex operator algebra, which we denote by $L^{\HVir}(c_{L},c_{L,I}) $. This algebra is generated by the f\/ields
\begin{gather*}
L(z)=Y(\omega ,z)=\sum_{n\in {\Z}}L(n)z^{-n-2},\qquad I(z)=Y(I,z)=\sum_{n\in {\Z}}I(n)z^{-n-1},
\end{gather*}
where $\omega =L(-2)\mathbf{1}$ and $I=I(-1)\mathbf{1}$. Moreover, highest weight $\HVir$-modules are modules over a~vertex operator algebra
$L^{\HVir}(c_{L,}c_{L,I})$.

It was shown in \cite{nas} that there is a monomorphism of vertex operator algebras
\begin{align}
\Psi \colon \ L^{W(2,2)}(c_{L},c_{W})& \rightarrow L^{\HVir}(c_{L},c_{L,I}),\label{Psi} \\
\omega & \mapsto L(-2)\mathbf{1}, \notag \\
W& \mapsto (I(-1)^{2}+2c_{L,I}I(-2))\mathbf{1}, \notag
\end{align}%
where $c_{W}=-24c_{L,I}^{2}$. By means of $\Psi $, each highest weight module over $\HVir$ becomes an \linebreak $L^{W(2,2)}(c_{L},c_{W})$-module and
therefore a module over $W(2,2) $. In particular, $\Psi $ induces a non-trivial $W(2,2)$-homomorphism (which we shall denote by the same letter)
\begin{gather*}
\Psi \colon \ V^{W(2,2)}(c_{L},c_{W},h,h_{W}) \rightarrow V^{\HVir} ( c_{L},0,c_{L,I},h,h_{I} ),
\end{gather*}
where $c_{W}=-24c_{L,I}^{2}$ and $h_{W}=h_{I}(h_{I}-2c_{L,I})$. $\Psi$ maps the highest weight vector $v_{h,h_{W}}$ to the highest weight vector
$v_{h,h_{I}}$ and the action of $W(-n)$ on $V^{\HVir} ( c_{L},0,c_{L,I},h,h_{I} )$ is given by
\begin{gather}
W(-n) \equiv 2c_{L,I}(n-1) I(-n)+\sum_{i\in \Z}I(-i)I(-n+i),\label{W u I} \\
W(-n) \equiv 2 c_{L,I}\left(n- 1 + \frac{h_I}{c_{L,I} } \right) I(- n)+\sum_{i\ne 0,n } I(-i)I(- n+i).\nonumber
\end{gather}

Note that $h_{W}=\frac{1-p^{2}}{24}c_{W}$ if and only if $h_{I}= ( 1\pm p) c_{L,I}$, so either both of these Verma modules are irreducible, or they are reducible with singular vectors at equal levels. Moreover, $ (h,h_W) \in {\mathcal {AT} }_{W(2,2)} ( c_L, c_W ) $ if and only if $(h,h_I) \in {\mathcal {AT} }_{\HVir} ( c_L, c_{L,I} )$.

Throughout the rest of this section we assume that $ c_W = -24c_{L,I}^2 $.

\begin{Lemma}[{\cite[Lemma 7.2]{nas}}] \label{psi}Suppose that $h_{I}\neq(1-p) c_{L,I}$ for all $p\in \N$. Then $\Psi$ is an isomorphism of $W(2,2)$-modules. In particular, if $ h_I \neq (1\pm p) c_{L,I} $ for $p \in \N $, then
\begin{gather*}
L^{\HVir}(h,h_I)\cong_{W(2,2)}L^{W(2,2)}(h,h_W),
\end{gather*}
where $h_W = h_I (h_I-2c_{L,I}) $.
\end{Lemma}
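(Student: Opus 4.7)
The plan is to leverage the equality of characters. Both $V^{W(2,2)}(h,h_{W})$ and $V^{\HVir}(h,h_{I})$ have graded character $q^{h}\prod_{k\geq 1}(1-q^{k})^{-2}$, and $\Psi$ is $L(0)$-equivariant, so it suffices to prove that $\Psi$ is surjective on each weight space; by finite dimensionality of each graded piece this is equivalent to $V^{\HVir}(h,h_{I}) = U(W(2,2))\,v_{h,h_{I}}$, where $W(2,2)$ acts through $\Psi$. Using the PBW basis of $V^{\HVir}(h,h_{I})$ in the form $L(-j_{1})\cdots L(-j_{b})I(-i_{1})\cdots I(-i_{a})\,v_{h,h_{I}}$ and the fact that each $L(-j)\in W(2,2)$, the problem reduces to showing that every pure $I$-monomial $I(-i_{1})\cdots I(-i_{a})\,v_{h,h_{I}}$ lies in $U(W(2,2))\,v_{h,h_{I}}$.

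The key technical input is that $[W(n),I(m)]=0$ on $V^{\HVir}(h,h_{I})$ for all $n,m\in\Z$. Indeed, since $C_{I}=0$ the OPE $I(z)I(w)$ is nonsingular, so $W=(I(-1)^{2}+2c_{L,I}I(-2))\mathbf{1}$ has nonsingular OPE with $I$ and all the modes commute. Combining this with \eqref{W u I} and absorbing the action of the central element $I(0)$ by $h_{I}$, one obtains
\begin{gather*}
W(-i_{1})\bigl(I(-i_{2})\cdots I(-i_{a})\,v_{h,h_{I}}\bigr) = c_{i_{1}}\,I(-i_{1})I(-i_{2})\cdots I(-i_{a})\,v_{h,h_{I}} + \sum_{0<j<i_{1}} I(-j)I(-i_{1}+j)I(-i_{2})\cdots I(-i_{a})\,v_{h,h_{I}},
\end{gather*}
where $c_{i_{1}}=2c_{L,I}(i_{1}-1)+2h_{I}$. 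The hypothesis $h_{I}\neq (1-p)c_{L,I}$ for all $p\in\N$ is precisely the statement that $c_{i_{1}}\neq 0$ for every $i_{1}\geq 1$, and it specializes at $i_{1}=1$ to $h_{I}\neq 0$.

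I then set up a double induction: a strong induction on the total weight $n=i_{1}+\cdots+i_{a}$, with a nested downward induction on the length $a$ inside each weight. The base of the downward length induction at weight $n\geq 1$ is the unique longest monomial $I(-1)^{n}v_{h,h_{I}}$, which equals $(2h_{I})^{-n}W(-1)^{n}v_{h,h_{I}}$ and therefore lies in $U(W(2,2))\,v_{h,h_{I}}$ since $h_{I}\neq 0$. In the inductive step for a length-$a$ monomial $I(-i_{1})\cdots I(-i_{a})\,v_{h,h_{I}}$, the shorter prefix $I(-i_{2})\cdots I(-i_{a})\,v_{h,h_{I}}$ has weight $n-i_{1}<n$ and so lies in $U(W(2,2))\,v_{h,h_{I}}$ by the outer weight induction; applying $W(-i_{1})\in W(2,2)$ keeps the result in $U(W(2,2))\,v_{h,h_{I}}$. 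The error terms in the displayed identity have weight $n$ but length $a+1$, so they are handled by the inner downward length induction, and dividing by $c_{i_{1}}\neq 0$ produces the target. The main obstacle is coordinating the two inductions so that at each step both the prefix (via weight) and the error terms (via length) are already in $U(W(2,2))\,v_{h,h_{I}}$; the hypothesis enters exactly to make each $c_{i_{1}}$ invertible.

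Finally, the ``in particular'' statement follows immediately: the stronger assumption $h_{I}\neq (1\pm p)c_{L,I}$ for all $p\in\N$ translates via $h_{W}=h_{I}(h_{I}-2c_{L,I})$ to $h_{W}\neq\tfrac{1-p^{2}}{24}c_{W}$, so by Theorems~\ref{billig} and~\ref{W-struktura} both Verma modules are irreducible, and the just-established isomorphism of Verma modules descends to the desired $W(2,2)$-module isomorphism $L^{\HVir}(h,h_{I})\cong L^{W(2,2)}(h,h_{W})$.
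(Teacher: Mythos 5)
Your argument is correct. Note that the paper does not actually prove this lemma itself -- it is imported from \cite[Lemma~7.2]{nas} -- but your proof is essentially the argument of that reference: one shows $U(W(2,2))\,v_{h,h_{I}}=V^{\HVir}(h,h_{I})$ using the explicit expression~(\ref{W u I}) for $W(-n)$, with the hypothesis $h_{I}\neq(1-p)c_{L,I}$ entering precisely through the non-vanishing of the coefficient $2c_{L,I}(n-1)+2h_{I}$ of $I(-n)$, and the equality of characters of the two Verma modules then upgrades surjectivity to an isomorphism. Your double induction (on weight, then downward on the length of the pure $I$-monomial) is a clean way to organize the reduction, and the final passage to irreducible quotients via Theorems~\ref{W-struktura} and~\ref{billig} together with the equivalence $h_{W}=\tfrac{1-p^{2}}{24}c_{W}\Leftrightarrow h_{I}=(1\pm p)c_{L,I}$ is exactly as intended.
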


\begin{Lemma}\label{H-sing je W-sing}Suppose that $x\in V^{\HVir}(h,h_I)$ is $\HVir$-singular. Then $x$ is $W(2,2)$-singular as well. In particular, if $y$ is a homogeneous vector such that $x = \Psi(y)$, then $y$ is either singular or cosingular vector in $V^{W(2,2)}(h,h_W)$.
\end{Lemma}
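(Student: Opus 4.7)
The plan is to apply formula \eqref{W u I}, and its analogue for positive modes, to express each $W(n)$ with $n>0$ acting on $V^{\HVir}(h,h_I)$ as a polynomial in the (mutually commuting, since $c_I=0$) Heisenberg operators $I(k)$, and to read off that every $\HVir$-singular vector is killed by it. Once $x$ is known to be $W(2,2)$-singular, the dichotomy for a preimage $y$ of $x$ under $\Psi$ will follow from the $W(2,2)$-homomorphism property of $\Psi$ together with the definition of a cosingular vector recalled in Section~\ref{rep-th-review}.

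For the first statement, note that $L(n)$ acts on $V^{\HVir}(h,h_I)$ in the same way under both algebras, so $L(n)x=0$ for $n>0$ is automatic. Since $c_I=0$, all the $I(k)$ commute with each other, and the positive-mode analogue of \eqref{W u I} reads
\[
W(n) \;=\; -2c_{L,I}(n+1)\,I(n) \;+\; \sum_{i+j=n} I(i)I(j), \qquad n>0,
\]
without any ordering subtleties. If $x$ is $\HVir$-singular then $I(k)x=0$ for every $k>0$; the linear term therefore vanishes, and every summand in the quadratic part must contain at least one positive mode (because $i+j=n>0$ precludes both $i,j\leq 0$). Commuting that mode to the right shows the summand annihilates $x$, whence $W(n)x=0$. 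Combined with $L(n)x=0$, this proves that $x$ is $W(2,2)$-singular.

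For the second statement, let $y\in V^{W(2,2)}(h,h_W)$ be homogeneous with $\Psi(y)=x\neq 0$. Since $\Psi$ is a $W(2,2)$-homomorphism, $\Psi(L(n)y)=L(n)x=0$ and $\Psi(W(n)y)=W(n)x=0$ for every $n>0$, so $L(n)y,\,W(n)y\in\Ker\Psi$. If all of these are zero then $y$ itself is $W(2,2)$-singular. Otherwise the class $y+\Ker\Psi$ is a nonzero (since $\Psi(y)\neq 0$) homogeneous vector of $V^{W(2,2)}(h,h_W)/\Ker\Psi$ killed by every positive mode, and $\Ker\Psi$ is a proper submodule because $\Psi$ sends $v_{h,h_W}$ to the nonzero vector $v_{h,h_I}$; by the definition of a cosingular vector, $y$ is cosingular in $V^{W(2,2)}(h,h_W)$.

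The only genuinely delicate step is the commutative computation that yields $W(n)x=0$; everything after that is bookkeeping with the homomorphism $\Psi$ and the definitions. I expect no serious obstacle beyond being careful that $\Ker\Psi$ is a proper submodule — which is immediate from $\Psi(v_{h,h_W})=v_{h,h_I}\neq 0$ — and that $y$ is assumed homogeneous, so that cosingularity is meaningful.
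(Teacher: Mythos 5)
Your proposal is correct and follows essentially the same route as the paper: the positive-mode version of \eqref{W u I} shows $W(n)x=0$ because the linear term is killed by $I(n)x=0$ and every quadratic summand $I(i)I(j)$ with $i+j=n>0$ contains a positive mode, and the second claim follows by passing to $V^{W(2,2)}(h,h_W)/\Ker\Psi$. You merely spell out two points the paper leaves implicit — the commutation argument for the quadratic part and the singular-versus-cosingular dichotomy via properness of $\Ker\Psi$ — so no further comment is needed.
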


\begin{proof}
Let $x\in V^{\HVir}(h,h_I)$ be a $\HVir$-singular vector, i.e., $L(k)x=I(k)x=0$ for all $k\in\N$. From (\ref{W u I}) we have
\begin{gather*}
W(n)x = -2c_{L,I}(n+1) I(n)x+\sum_{i\in \Z}I(-i)I(n+i)x,
\end{gather*}
so $W(n)x=0$ for all $n\in\N$. Therefore, $x$ is $W(2,2)$-singular.
If $x=\Psi(y)$, then $L(k)y, W(k)y$ $\in \Ker \Psi$ for $k>0$. Therefore $y + \Ker \Psi$ is a singular vector in $V^{W(2,2)}(h,h_W) / \Ker \Psi$.
\end{proof}

\begin{Theorem} \label{t33} \label{L za +}Let $p\in\N$.
\begin{enumerate}\itemsep=0pt
\item[$(i)$] If $(h,(1+p) c_{L,I})$ is typical for $\HVir$ $($equivalently if $\big(h,\frac{1-p^{2}}{24}c_{W}\big) $ is typical for $W(2,2))$ then
\begin{gather}
L^{\HVir}(h,(1+p) c_{L,I}) \cong_{W(2,2) }L^{W(2,2)}\left( h,\frac{1-p^{2}}{24}c_{W}\right).\label{L + gen}
\end{gather}

\item[$(ii)$] If $(h_{p,r},(1+p) c_{L,I}) \in {\mathcal {AT} }_{\HVir} ( c_L, c_{L,I} )$ $($equivalently if $ (h_{p,r},\frac{1-p^{2}}{24}c_{W}) \in {\mathcal {AT} }_{W(2,2)} ( c_L, c_W ))$ then
\begin{gather*}
L^{\HVir}(h_{p,r},(1+p) c_{L,I}) \cong_{W(2,2) }\widetilde{L}^{W(2,2)}\left( h_{p,r},\frac{1-p^{2}}{24}c_{W}\right) 
\end{gather*}
and the short sequence of $W(2,2)$-modules
\begin{align}
0 & \rightarrow L^{W(2,2)}\left( h_{p,r}+rp,\frac{1-p^{2}}{24}c_{W}\right)\rightarrow L^{\HVir}(h_{p,r},(1+p) c_{L,I}) \label{seq +} \\
& \rightarrow L^{W(2,2)}\left( h_{p,r},\frac{1-p^{2}}{24}c_{W}\right) \rightarrow0 \nonumber
\end{align}
is exact.
\end{enumerate}
\end{Theorem}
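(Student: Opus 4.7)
The plan is to leverage Lemma~\ref{psi} to transfer the entire problem from $V^{\HVir}$ to $V^{W(2,2)}$ and then quotient by the appropriate maximal submodule on each side. For the weight $h_{I}=(1+p)c_{L,I}$ with $p\in\N$, one has $(1+p)c_{L,I}\neq(1-p')c_{L,I}$ for every $p'\in\N$, so Lemma~\ref{psi} applies and yields a $W(2,2)$-isomorphism
\begin{gather*}
\Psi\colon V^{W(2,2)}(h,h_W)\xrightarrow{\;\sim\;}V^{\HVir}(h,(1+p)c_{L,I}),
\end{gather*}
where $h_W=h_I(h_I-2c_{L,I})=(p^2-1)c_{L,I}^{2}=\tfrac{1-p^{2}}{24}c_W$. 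Because $h_W=\tfrac{1-p^{2}}{24}c_W$, Theorem~\ref{W-struktura}(i) gives a $W(2,2)$-singular vector $u_p'$ at level $p$ in $V^{W(2,2)}(h,h_W)$ generating a Verma submodule $V^{W(2,2)}(h+p,h_W)$; by Theorem~\ref{billig}, $V^{\HVir}(h,(1+p)c_{L,I})$ has a unique maximal $\HVir$-submodule, generated by its $\HVir$-singular vector $v_p^{+}$ at level $p$, isomorphic to $V^{\HVir}(h+p,(1+p)c_{L,I})$.

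The technical heart of the argument is to identify these two submodules under $\Psi$. By Lemma~\ref{H-sing je W-sing}, $v_p^{+}$ is $W(2,2)$-singular, so $\Psi^{-1}(v_p^{+})$ is a $W(2,2)$-singular vector of weight $h+p$ in $V^{W(2,2)}(h,h_W)$, which by Theorem~\ref{W-struktura}(i) must be a nonzero scalar multiple of $u_p'$. Hence $\Psi\bigl(\langle u_p'\rangle_{W(2,2)}\bigr)=\langle v_p^{+}\rangle_{W(2,2)}\subseteq\langle v_p^{+}\rangle_{\HVir}$. Both sides are highest weight $W(2,2)$-modules of highest weight $(h+p,h_W)$ whose characters coincide with $q^{h+p}\prod_{k\ge 1}(1-q^{k})^{-2}$ (the former because $\Psi$ is injective and $\langle u_p'\rangle_{W(2,2)}\cong V^{W(2,2)}(h+p,h_W)$; the latter by Lemma~\ref{psi} applied at the shifted weight, or directly from the character formula for $V^{\HVir}$), so the inclusion is an equality. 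Passing to quotients, $\Psi$ descends to a $W(2,2)$-isomorphism
\begin{gather*}
\widetilde{L}^{W(2,2)}\!\left(h,\tfrac{1-p^{2}}{24}c_W\right)=\frac{V^{W(2,2)}(h,h_W)}{\langle u_p'\rangle_{W(2,2)}}\xrightarrow{\;\sim\;}\frac{V^{\HVir}(h,(1+p)c_{L,I})}{\langle v_p^{+}\rangle_{\HVir}}=L^{\HVir}(h,(1+p)c_{L,I}).
\end{gather*}

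With this identification in hand, both parts follow immediately. For~(i), typicality of $(h,(1+p)c_{L,I})$ is equivalent to typicality of $(h,h_W)$ for $W(2,2)$, and by Theorem~\ref{W-struktura}(ii) the module $\widetilde{L}^{W(2,2)}(h,h_W)$ is then irreducible and equals $L^{W(2,2)}(h,h_W)$, giving~\eqref{L + gen}. For~(ii), atypicality $h=h_{p,r}$ puts us in the setting of the exact sequence~\eqref{seq}, which under the isomorphism above transports verbatim to the short exact sequence~\eqref{seq +}. The principal obstacle I anticipate is the character/uniqueness step used to conclude $\langle v_p^{+}\rangle_{W(2,2)}=\langle v_p^{+}\rangle_{\HVir}$; everything else is essentially bookkeeping once Lemmas~\ref{psi} and~\ref{H-sing je W-sing} and Theorem~\ref{W-struktura} are combined.
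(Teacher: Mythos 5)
Your proof is correct, and while its overall frame coincides with the paper's --- transport the problem along the Verma-module isomorphism $\Psi$ of Lemma~\ref{psi}, identify the level-$p$ singular vectors via Lemma~\ref{H-sing je W-sing}, and pass to quotients --- the decisive step is handled by a genuinely different mechanism. The paper pins down the image of the maximal $\HVir$-submodule by arguing on the cosingular vector: it combines Remark~\ref{pbw} (the cosingular vector $u_{rp}$ has $L(-p)^{r}v$ as a PBW component) with Remark~\ref{kompon} (no element of the maximal $\HVir$-submodule has a nonzero additive component in $\C[L(-1),L(-2),\ldots]v$) to conclude that $\Psi(u_{rp})$ survives in the irreducible quotient, so that the maximal $\HVir$-submodule pulls back to exactly $\langle u_p'\rangle_{W(2,2)}$. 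You instead establish $\Psi\bigl(\langle u_p'\rangle_{W(2,2)}\bigr)=\langle v_p^{+}\rangle_{\HVir}$ directly by a character count: both are graded subspaces of the Verma module with character $q^{h+p}\prod_{k\geq 1}\bigl(1-q^{k}\bigr)^{-2}$, one contains the other, and the homogeneous components are finite-dimensional, so they coincide. This buys uniformity --- the typical and atypical cases are treated simultaneously and the PBW bookkeeping is avoided --- at the cost of not locating the image of the cosingular vector, information the paper reuses in Theorem~\ref{t37} and in the subsequent description of the nested submodule structure. The one assertion you use without proof, that a $W(2,2)$-singular vector at level $p$ in $V^{W(2,2)}(h,h_W)$ must be proportional to $u_p'$, is invoked at the same level of rigor in the paper itself and does follow from Theorem~\ref{W-struktura}: an independent singular vector at level $p$ would project to a singular vector of $\widetilde{L}^{W(2,2)}(h,h_W)$ at level $p$, forcing $h=h_{p,1}$ and making the cosingular vector $u_p$ itself singular, a contradiction.
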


\begin{proof}
By Lemma \ref{psi}, $\Psi$ is an isomorphism of Verma modules and thus by Lemma \ref{H-sing je W-sing} it maps a $W(2,2) $-singular vector $u^{\prime}_p$ to an $\HVir$-singular vector $v_p^+$. If $h\neq h_{p,r}$, both of these vectors generate maximal submodules in respective Verma modules so~(\ref{L + gen}) follows.

Now suppose that $h=h_{p,r}$. We need to show that a cosingular vector $u_{rp}$ is not mapped into a~maximal submodule of $V^\HVir(h_{p,r},h_I)$. But $u_{rp}$ has $L(-p) ^{r}v$ as an additive component (see Remark~\ref{pbw}), and by construction~(\ref{Psi}), $\Psi(u_{rp})$ also must have this additive component. However, $\Psi(u_{rp})$ can not lie in a maximal $\HVir$-submodule of $V^{\HVir}(h,h_{I}) $ (see Remark~\ref{kompon}). This means that isomorphism $\Psi$ of Verma modules induces a $W(2,2) $-isomorphism of $\widetilde{L}^{W(2,2)}(h,h_{W}) $ and $L^{\HVir}(h,h_{I}) $ for all $h\in\C$. Exactness of~(\ref{seq +}) is just an application of~(\ref{seq}).
\end{proof}

\begin{Remark}
Note that the image $\Psi(u_{rp}) $ of a $W(2,2)$-cosingular vector is neither $\HVir$-singular, nor $\HVir$-cosingular in $V^{\HVir}(h_{p,r},(1+p)c_{L,I})$. For example, $L(-1) v_{0,0}$ in $V^{\HVir}(0,2c_{L,I})$ is $W(2,2)$-cosingular, but not $\HVir$-singular since $I(1) L(-1) v_{0,0}=2c_{L,I}v_{0,0}$.
\end{Remark}

If $h_{I}=(1-p) c_{L,I}$, then $\Psi $ is not an isomorphism. We shall present a $W(2,2)$-structure of Verma module later. In order to examine irreducible $W(2,2)$-modules we apply the properties of contragredient modules.

Let us recall the def\/inition of contragredient module (see~\cite{FHL}). Assume that $(M,Y_{M})$ is a~graded module over a vertex operator algebra~$V$ such that $M=\oplus _{n=0}^{\infty }M(n)$, $\dim M(n)<\infty $ and suppose that there is $\gamma \in {\C}$ such that $L(0)|M(n)\equiv (\gamma +n)\operatorname{Id}$. The contragredient module $( M^{\ast },Y_{M^{\ast}}) $ is def\/ined as follows. For every $n\in {\N}$ let $M(n)^{\ast }$ be the dual vector space and let $M^{\ast }=\oplus _{n=0}^{\infty }M(n)^{\ast }$ be a restricted dual of~$M$. Consider the natural pairing $\langle \cdot ,\cdot \rangle :M^{\ast }\otimes M\rightarrow \C$. Def\/ine the linear map $Y_{M^{\ast }}\colon V\rightarrow \operatorname{End} M^{\ast }[[z,z^{-1}]]$ such that
\begin{gather}
\langle Y_{M^{\ast }}(v,z)m^{\prime },m\rangle =\big\langle m^{\prime },Y_{M}\big(e^{zL(1)}\big({-}z^{-2}\big)^{L(0)}v,z^{-1}\big)m\big\rangle \label{contra}
\end{gather}
for each $v\in V$, $m\in M$, $m^{\prime }\in M^{\ast }$. Then $(M^{\ast },Y_{M^{\ast }}) $ is a $V$-module.

In particular, choosing $v=\omega =L_{-2}\boldsymbol{1}$ in (\ref{contra}) one gets
\begin{gather*}
\langle L(n) m^{\prime },m\rangle =\langle m^{\prime },L(-n) m\rangle .
\end{gather*}
Simple calculation with $I\in L^{\HVir}(c_{L},c_{L,I}) $ and $W\in L^{W(2,2) } ( c_{L},c_{W} ) $ shows that
\begin{gather*}
\langle I(n) m^{\prime },m\rangle = \langle m^{\prime },-I(-n) m+\delta _{n,0}2c_{L,I} \rangle , \qquad
\langle W(n) m^{\prime },m\rangle =\langle m^{\prime },W(-n) m\rangle .
\end{gather*}
Therefore we get the following result (the f\/irst and third relations were given in~\cite{nas}):

\begin{Lemma}\label{duali}
\begin{gather*}
L^{\HVir}(h,h_{I})^{\ast} \cong L^{\HVir}(h,-h_{I}+2c_{L,I}),\qquad
L^{W(2,2) }(h,h_{W})^{\ast} \cong L^{W(2,2) }(h,h_{W}) .
\end{gather*}
In particular,
\begin{gather*}
L^{\HVir}(h,(1\pm p) c_{L,I})^{\ast}\cong L^{\HVir}(h,(1\mp p) c_{L,I}).
\end{gather*}
\end{Lemma}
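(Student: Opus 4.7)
The claim is a direct computation from the contragredient-module definition \eqref{contra} together with the explicit formulas for the adjoints of $L(n)$, $I(n)$, $W(n)$ displayed just before the statement. The strategy is to identify a highest weight vector in the contragredient and to use the standard fact that the contragredient of an irreducible module (for a vertex operator algebra) is irreducible, whence $M^{\ast}$ is uniquely determined by the highest weight of any highest weight vector it contains.

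Write $M = L^{\HVir}(h,h_{I})$, decompose $M = \bigoplus_{n\ge 0} M(n)$ with $M(0) = \C v_{h,h_{I}}$, and let $v^{\ast}\in M(0)^{\ast}$ be the functional dual to $v_{h,h_{I}}$. Because $L(-n)$ and $I(-n)$ shift the $L(0)$-grading up by $n$ for $n>0$, the pairing formulas give
\begin{gather*}
\langle L(n)v^{\ast}, m\rangle = \langle v^{\ast}, L(-n)m\rangle = 0,\qquad
\langle I(n)v^{\ast}, m\rangle = \langle v^{\ast}, -I(-n)m\rangle = 0
\end{gather*}
for every $n>0$ and $m\in M$, since $L(-n)m$ and $I(-n)m$ have no component in $M(0)$. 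Similarly,
\begin{gather*}
\langle L(0)v^{\ast}, v_{h,h_{I}}\rangle = h,\qquad
\langle I(0)v^{\ast}, v_{h,h_{I}}\rangle = -h_{I}+2c_{L,I},
\end{gather*}
so $v^{\ast}$ is a singular vector of weight $(h, -h_{I}+2c_{L,I})$. Since $M$ is irreducible, so is $M^{\ast}$ (otherwise $M^{\ast\ast}\cong M$ would be reducible), and therefore $M^{\ast}$ is generated by $v^{\ast}$ and isomorphic to $L^{\HVir}(h,-h_{I}+2c_{L,I})$.

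The same argument, applied verbatim with $I(n)$ replaced by $W(n)$ and using $\langle W(n)m',m\rangle = \langle m', W(-n)m\rangle$, yields a highest weight vector in $L^{W(2,2)}(h,h_{W})^{\ast}$ of weight $(h,h_{W})$, hence $L^{W(2,2)}(h,h_{W})^{\ast}\cong L^{W(2,2)}(h,h_{W})$. The displayed consequence is obtained by specialising: if $h_{I}=(1\pm p)c_{L,I}$, then $-h_{I}+2c_{L,I}=(1\mp p)c_{L,I}$.

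There is no substantive obstacle here; the only point requiring care is the bookkeeping of the twist by $2c_{L,I}$ on $I(0)$ coming from the normally ordered nature of $I(z)$ in the contragredient formula \eqref{contra}, which is precisely what produces the reflection $h_{I}\mapsto -h_{I}+2c_{L,I}$ rather than $h_{I}\mapsto -h_{I}$.
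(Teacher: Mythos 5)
Your proposal is correct and follows essentially the same route as the paper: both compute the adjoints of $L(n)$, $I(n)$, $W(n)$ from the contragredient formula \eqref{contra} (in particular the shift $\delta_{n,0}2c_{L,I}$ coming from $L(1)I(-1)\mathbf{1}=-2c_{L,I}\mathbf{1}$) and read off the highest weight of the dual. Your additional remarks — identifying $v^{\ast}$ as a singular vector and invoking irreducibility of the contragredient of an irreducible module — are exactly the standard facts the paper leaves implicit.
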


Directly from Theorem \ref{L za +} and Lemma \ref{duali} follows

\begin{Corollary}\label{L za -}Let $p \in \N $.
\begin{enumerate}\itemsep=0pt
\item[$(i)$] If $(h,(1-p) c_{L,I})$ is typical for $\HVir$ $($equivalently if $\big( h,\frac{1-p^{2}}{24}c_{W}\big) $ is typical for $W(2,2))$ then
\begin{gather*}
L^{\HVir} ( h,(1-p) c_{L,I} ) \cong_{W(2,2) }L^{W(2,2)}\left( h,\frac{1-p^{2}}{24}c_{W}\right) .
\end{gather*}

\item[$(ii)$] If $(h_{p,r},(1-p) c_{L,I}) \in {\mathcal {AT} }_{\HVir} ( c_L, c_{L,I} )$ $($equivalently if $ \big(h_{p,r},\frac{1-p^{2}}{24}c_{W}\big) \in {\mathcal {AT} }_{W(2,2)} ( c_L, c_W ) )$ then
\begin{gather*}
L^{\HVir} ( h_{p,r},(1-p) c_{L,I} ) \cong_{W(2,2) }\widetilde{L}^{W(2,2)}\left( h_{p,r},\frac{1-p^{2}}{24}c_{W}\right) ^{\ast}
\end{gather*}
and the short sequence of $W(2,2)$-modules
\begin{align*}
0 & \rightarrow L^{W(2,2)}\left( h_{p,r},\frac{1-p^{2}}{24}c_{W}\right)
\rightarrow L^{\HVir}\left( h_{p,r},(1-p) c_{L,I}\right) \\ 
& \rightarrow L^{W(2,2)}\left( h_{p,r}+rp,\frac{1-p^{2}}{24}c_{W}\right)
\rightarrow0 \nonumber
\end{align*}
is exact.
\end{enumerate}
\end{Corollary}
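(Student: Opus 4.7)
The plan is to deduce the corollary from Theorem \ref{L za +} by applying the contragredient duality in Lemma \ref{duali}. The crucial point to justify at the outset is that the contragredient construction is compatible with the vertex algebra embedding $\Psi$ in (\ref{Psi}): since $\Psi$ is a morphism of vertex operator algebras, the $W(2,2)$-action on $L^{\HVir}(h,h_I)^{\ast}$ obtained from pulling back via $\Psi$ coincides with the $W(2,2)$-contragredient of the $W(2,2)$-structure on $L^{\HVir}(h,h_I)$. Consequently any $W(2,2)$-isomorphism of $\HVir$-modules dualizes to a $W(2,2)$-isomorphism of the contragredient modules, and any short exact sequence of $W(2,2)$-modules dualizes to an exact sequence with reversed arrows.

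For part (i), I would start from Theorem \ref{L za +}(i), which gives
\begin{gather*}
L^{\HVir}(h,(1+p)c_{L,I}) \cong_{W(2,2)} L^{W(2,2)}\bigl(h,\tfrac{1-p^{2}}{24}c_{W}\bigr).
\end{gather*}
Taking contragredients on both sides and applying Lemma \ref{duali} yields
\begin{gather*}
L^{\HVir}(h,(1-p)c_{L,I}) \cong_{W(2,2)} L^{W(2,2)}\bigl(h,\tfrac{1-p^{2}}{24}c_{W}\bigr)^{\ast} \cong_{W(2,2)} L^{W(2,2)}\bigl(h,\tfrac{1-p^{2}}{24}c_{W}\bigr),
\end{gather*}
since by Lemma \ref{duali} the irreducible $W(2,2)$-module is self-dual. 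Typicality is preserved because $(h,(1+p)c_{L,I})$ and $(h,(1-p)c_{L,I})$ give the same $h_W = h_I(h_I - 2c_{L,I}) = \tfrac{1-p^{2}}{24}c_W \cdot (-24/c_W) \cdot \ldots$; more simply, by the equivalence recorded just after (\ref{W u I}), both lie in ${\mathcal{AT}}_{\HVir}$ or neither does.

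For part (ii), I would apply the same procedure to Theorem \ref{L za +}(ii). The isomorphism
\begin{gather*}
L^{\HVir}(h_{p,r},(1+p)c_{L,I}) \cong_{W(2,2)} \widetilde{L}^{W(2,2)}\bigl(h_{p,r},\tfrac{1-p^{2}}{24}c_{W}\bigr)
\end{gather*}
dualizes, again via Lemma \ref{duali}, to the desired identification of $L^{\HVir}(h_{p,r},(1-p)c_{L,I})$ with $\widetilde{L}^{W(2,2)}(h_{p,r},\tfrac{1-p^{2}}{24}c_{W})^{\ast}$. For the exact sequence, I would apply contragredient to (\ref{seq +}), which reverses arrows, and then use self-duality of the irreducible $W(2,2)$-modules (Lemma \ref{duali}) together with the contragredient identification for the middle term. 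The submodule and quotient swap places, producing precisely the short exact sequence stated in the corollary.

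The only real subtlety is justifying compatibility of contragredient duality with the $W(2,2)$-module structure inherited through $\Psi$. Once that is established, everything reduces to dualizing the statements of Theorem \ref{L za +} term-by-term using Lemma \ref{duali}; no new computations involving explicit singular or cosingular vectors are required.
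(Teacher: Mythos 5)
Your proposal is correct and matches the paper's argument: the paper derives this corollary precisely by dualizing Theorem~\ref{L za +} via Lemma~\ref{duali}, and the compatibility issue you flag is exactly what the paper settles beforehand by computing $\langle W(n) m^{\prime},m\rangle =\langle m^{\prime},W(-n) m\rangle$ from the explicit image of $W$ under $\Psi$ and formula~(\ref{contra}).
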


From Lemma \ref{psi}, Theorem \ref{L za +} and Corollary \ref{L za -} follow assertions of Theorem \ref{main}.

Finally, we show that Verma module over $\HVir$ is an inf\/inite direct sum of irreducible $W(2,2)$-modules. Recall that $V^{\HVir} ( h,(1-p) c_{L,I} )$ has a series of singular vectors $v_{ip}^-$, $i\in \Z_{\ge 0} $ (for $i= 0$, we set $v^-_0 = v_{h, h_I}$) which generate a descending chain of Verma submodules over~$\HVir$:
\begin{align*}
\langle v_{h,h_I} \rangle_{\HVir} =& V^{\HVir}(h,h_I) \\
\rotatebox{90}{$\subseteq$}\qquad & \\[-2mm]
\langle v_p^- \rangle_{\HVir} \cong& V^{\HVir}(h+p,h_I) \\
\rotatebox{90}{$\subseteq$}\qquad & \\[-3mm]
\vdots \qquad & \\
\rotatebox{90}{$\subseteq$}\qquad & \\[-3mm]
\langle v_{ip}^- \rangle_{\HVir} \cong& V^{\HVir}(h+ip,h_I) \\
\rotatebox{90}{$\subseteq$}\qquad & \\[-3mm]
\langle v_{(i+1)p}^- \rangle_{\HVir} \cong& V^{\HVir}(h+(i+1)p,h_I) \\
\rotatebox{90}{$\subseteq$}\qquad & \\[-3mm]
\vdots \qquad &
\end{align*}
Therefore one may identify $V^{\HVir}(h+ip,h_I)$ with a submodule of $V^{\HVir}(h,h_I) $ and a singular vector $v_{ip}^-\in V^{\HVir}(h,h_I)$ with the highest weight vector $v_{h+ip,h_I} \in V^{\HVir}(h+ip,h_I)$. We will prove that in a typical case each of those vectors generates an irreducible $W(2,2)$-submodule.

\begin{Theorem} \label{t37}
Let $p\in\N$. Suppose that $(h,(1-p) c_{L,I}) \notin {\mathcal {AT} }_{\HVir} (c_L, c_{L,I} )$.
Then we have the following isomorphism of $W(2,2)$-modules
\begin{gather*}
V^{\HVir}(h,(1-p) c_{L,I}) \cong_{W(2,2)}\bigoplus\limits_{i\geq0}L^{W( 2,2)}\left(h+ip,\frac{1-p^{2}}{24}c_{W}\right).
\end{gather*}
\end{Theorem}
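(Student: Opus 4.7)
The plan is to split the infinite descending chain of $\HVir$-submodules $M_i:=\langle v_{ip}^-\rangle_{\HVir}\cong V^{\HVir}(h+ip,h_I)$ of $V^{\HVir}(h,h_I)$ into a direct sum of irreducible $W(2,2)$-modules $N_i:=\langle v_{ip}^-\rangle_{W(2,2)}$. Typicality propagates along the chain: if $h+ip=h_{p,s}$ for some $s\in\N$, then (using $h_{p,r+2}=h_{p,r}-p$) one has $h=h_{p,s+2i}$, contradicting $(h,h_I)\notin\mathcal{AT}_{\HVir}(c_L,c_{L,I})$. Combined with Lemma~\ref{H-sing je W-sing}, each $v_{ip}^-$ is $W(2,2)$-singular of weight $(h+ip,\frac{1-p^2}{24}c_W)$, and Corollary~\ref{L za -}(i) identifies $M_i/M_{i+1}\cong_{W(2,2)}L^{W(2,2)}(h+ip,\frac{1-p^2}{24}c_W)$, so $N_i$ surjects onto this irreducible quotient.

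The key step is to show $N_i$ is in fact irreducible, i.e. $N_i\cap M_{i+1}=0$, equivalently that the $W(2,2)$-map $\Psi_i\colon V^{W(2,2)}(h+ip,\frac{1-p^2}{24}c_W)\to V^{\HVir}(h,h_I)$ sending the highest weight vector to $v_{ip}^-$ kills the singular vector $u'_p$ of Theorem~\ref{W-struktura}(i). By the shift $(h,h_I)\mapsto(h+ip,h_I)$, still typical, it suffices to prove $\Psi(u'_p)=0$ for $i=0$. For this I would exploit the Heisenberg Fock $\mathcal F=\C[I(-1),I(-2),\ldots]v_{h,h_I}\subseteq V^{\HVir}(h,h_I)$: since $C_I=0$ the OPE of $Y(I,z)$ with $Y(W,z)$ is regular, so $[W(n),I(m)]=0$ on $V^{\HVir}(h,h_I)$; hence $\mathcal F$ is $W(-m)$-stable, and formula~\eqref{W u I} (with $h_I=(1-p)c_{L,I}$) reads
\begin{gather*}
W(-m)v_{h,h_I}=2c_{L,I}(m-p)\,I(-m)v_{h,h_I}+\sum_{k=1}^{m-1}I(-k)I(-m+k)v_{h,h_I}.
\end{gather*}

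Writing $x_k:=I(-k)v_{h,h_I}$ and $W_m:=2c_{L,I}(m-p)x_m+\sum_{k=1}^{m-1}x_kx_{m-k}\in\C[x_1,x_2,\ldots]$, the assignment $W(-m)\mapsto W_m$ extends to an algebra homomorphism $\tau\colon\C[W(-1),W(-2),\ldots]\to\C[x_1,x_2,\ldots]$ satisfying $\Psi(Pv_{h,h_W})=\tau(P)$ for every polynomial $P$. The crucial observation is that at grading level $p$ every $W_m$ with $m\le p$ lies in $\C[x_1,\ldots,x_{p-1}]$: for $m<p$ it involves only $x_1,\ldots,x_m$, and the coefficient of $x_p$ in $W_p$ is $2c_{L,I}(p-p)=0$. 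Hence $\tau(\C[W(-1),\ldots,W(-p)]_p)\subseteq\C[x_1,\ldots,x_{p-1}]_p$, whose dimension is $P(p)-1$, while $\C[W(-1),\ldots,W(-p)]_p$ has dimension $P(p)$; therefore $\ker\tau$ at level $p$ is one-dimensional, spanned by some $R\neq 0$.

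Then $Rv_{h,h_W}\neq 0$ in $V^{W(2,2)}(h,h_W)$ by PBW, and $\Psi(Rv_{h,h_W})=\tau(R)=0$, so $Rv_{h,h_W}\in\Ker\Psi$. Because $(h,h_W)$ is typical for $W(2,2)$, the submodules of $V^{W(2,2)}(h,h_W)$ are precisely the sub-Vermas $V^{W(2,2)}(h+jp,h_W)$ (Theorem~\ref{W-struktura}); since $Rv_{h,h_W}$ is nonzero at level $p$, one gets $\Ker\Psi=V^{W(2,2)}(h+p,h_W)=\langle u'_p\rangle_{W(2,2)}$, whence $\Psi(u'_p)=0$ and $N_0\cong L^{W(2,2)}(h,h_W)$; the shift then yields $N_i\cong L^{W(2,2)}(h+ip,\frac{1-p^2}{24}c_W)$ for every $i$. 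The filtration now splits: $N_i\cap M_{i+1}=0$ since the highest weight $h+ip$ of the irreducible $N_i$ is strictly below the lowest weight $h+(i+1)p$ of $M_{i+1}$, and $N_i+M_{i+1}=M_i$ because $N_i$ maps onto $M_i/M_{i+1}$; thus $M_i=N_i\oplus M_{i+1}$, and iterating, together with the fact that on any fixed weight space $M_k$ vanishes for $k$ large, produces $V^{\HVir}(h,h_I)=\bigoplus_{i\geq 0}N_i$. The main obstacle is the Fock-space dimension count establishing $\dim\ker\tau|_p=1$, which is what pins down $\Ker\Psi$ exactly.
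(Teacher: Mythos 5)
Your proof is correct and takes essentially the same route as the paper: both arguments hinge on the vanishing of the coefficient of $I(-p)$ in $\Psi(W(-p)v)$ when $h_I=(1-p)c_{L,I}$, which forces the kernel of each induced map $V^{W(2,2)}(h+ip,h_W)\to V^{\HVir}(h,h_I)$ to be the maximal submodule, so that each $\HVir$-singular vector $v_{ip}^-$ generates an irreducible $W(2,2)$-module, and the decomposition then follows by a character/weight-space count. (One cosmetic caveat: your partition count as written only gives $\dim\ker\tau|_p\ge 1$ rather than $=1$, but that is all the argument actually uses.)
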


\begin{proof} First we notice that the vertex algebra homorphism $\Psi \colon L^{W(2,2)} (c_W, c_L) \rightarrow L^{ \HVir } (c_W, c_L) $, for every $i \in {\Z}_{\ge 0}$ induces the following non-trivial homomorphism of $W(2,2)$-modules:
\begin{gather*}
\Psi ^{(i)} \colon \ V^{W( 2,2)}\left(h+ip,\frac{1-p^{2}}{24}c_{W}\right) \rightarrow \langle v_{ip}^- \rangle_{W(2,2) } \subset V^{\HVir}(h+ip,(1-p) c_{L,I}) ,
\end{gather*}
which maps the highest weight vector of $V^{W( 2,2)}\big(h+ip,\frac{1-p^{2}}{24}c_{W}\big) $ to $v_{ip}^- $. Since $(h,\frac{1-p^{2}}{24}c_{W})$ is typical it follows from Proposition~\ref{svi cosing}(i) that $(h+ip,\frac{1-p^{2}}{24}c_{W})$ are typical for all $i\in\N$ as well.

Let $h_W=\frac{1-p^2}{24}c_W$. Consider the homomorphism $\Psi ^{(i)} \colon V^{ W(2,2) }(h+ ip ,h_W) \rightarrow V^{\HVir}(h+ip,h_I) $ above. Applying (\ref{W u I}), we get
\begin{gather*}
\Psi^{(i)} (W(-p) v_{h+ip ,h_{W}}) =\sum_{i=1}^{p-1}I(-i) I(i-p) v_{h+ip ,h_{I}},
\end{gather*}
so $I(-p)v_{h+ip ,h_I} \notin\operatorname{Im} \Psi ^{(i )} $. Since the Verma modules $V^{W(2,2)}(h+ip ,h_W) $ and $V^{\HVir}(h+i p ,h_I)$ have equal characters, it follows that $\Ker \Psi^{(i )} $ contains a singular vector in $V^{ W(2,2) }(h+ i p ,h_W) $ of conformal weight $ h + (i+1) p$.
Since the weight $(h+ip, h_W)$ is typical, the maximal submodule in $V^{W(2,2)}(h+i p ,h_{W}) $ is generated by this singular vector so we conclude that $\Ker \Psi ^{(i )} $ is the maximal submodule in $V^{W(2,2)}(h+i p ,h_{W}) $. Therefore
\begin{gather*}
\operatorname{Im} \Psi ^{(i )} = \langle v_{h+ ip ,h_{I}} \rangle _{W(2,2)}\cong L^{W(2,2)}(h+ i p ,h_W).
\end{gather*}
In this way we get a series of $W(2,2)$-monomorphisms
\begin{gather}
L^{W(2,2)}(h+ip,h_{W}) \hookrightarrow V^{\HVir}(h,(1-p) c_{L,I}) ,\qquad i\in\Zp \label{ulaganja}
\end{gather}
mapping $v_{h+ip,h_{W}}$ to a singular vector $v_{ip}^-$. Let $v_{j p}^-$ be an $\HVir$-singular vector in $V^{\HVir}(h + i p ,(1-p) c_{L,I})$ of weight $h + j p$, for $j > i$. By Lemma~\ref{H-sing je W-sing}, $v_{j p} ^- $ is singular for $W(2,2)$ and therefore $v_{j p}^- \notin \left\langle v_{h+ i p ,h_{I}}\right\rangle _{W(2,2)}$ for $j > i$. We conclude that the images of morphisms~(\ref{ulaganja}) have trivial pairwise intersections (since these images are non-isomorphic irreducible modules), so their sum is direct. The assertion follows from the observation that the character of this sum is
\begin{gather*}
\sum_{i =0} ^{\infty} q^{h+ i p }\big(1-q^{p}\big)\prod\limits_{k\geq1}\big(1-q^{k}\big)^{-2} = q^{h } \prod\limits_{k\geq1}\big(1-q^{k}\big)^{-2} = \operatorname{char} V^{\HVir} (h, (1-p)c_{L.I} ). \tag*{\qed}
\end{gather*}
\renewcommand{\qed}{}
\end{proof}

\begin{Remark} \label{ff}
It is interesting to notice that our Theorem~\ref{t37} shows that $V^{\HVir}(h,h_I) $ can be considered as a $W(2,2)$-analogue of certain Feigin--Fuchs modules for the Virasoro algebra which are also direct sums of inf\/initely many irreducible modules (cf.\ \cite{FF}, \cite[Theorem~5.1]{AdM1}).
\end{Remark}

From the previous theorem follows
\begin{align*}
V^{\HVir}(h,h_I) \\
\rotatebox{90}{$=$} \qquad & \\[-1mm]
\langle v_{h,h_I} \rangle_{W(2,2)} =& L^{W(2,2)}(h,h_W) \\[-1mm]
\oplus\qquad & \\[-1mm]
\langle v_p^- \rangle_{W(2,2)} \cong& L^{W(2,2)}(h+p,h_W) \\[-1mm]
\oplus\qquad & \\[-1mm]
\vdots \qquad & \\[-1mm]
\oplus\qquad & \\[-1mm]
\langle v_{ip}^- \rangle_{W(2,2)} \cong& L^{W(2,2)}(h+ip,h_W) \\[-1mm]
\oplus\qquad & \\[-1mm]
\vdots \qquad &
\end{align*}

In atypical case however, the $W(2,2)$-submodules generated by $\HVir$--singular vectors are nested as follows. Consider $V^{\HVir}(h_{p,r},h_I)$ where $(h_{p,r},h_I) \in {\mathcal {AT} }_{\HVir} ( c_L, c_{L,I} ) $. Then $\Psi^{0}$ maps a cosingular vector $u_{rp}\in V^{W(2,2)}(h_{p,r},h_W)$ to a singular vector $v_{rp}^-$. In other words we have
\begin{gather*}
\langle v_{rp}^- \rangle_{W(2,2)} \subseteq \langle v_{h_{p,r},h_I} \rangle \cong_{W(2,2)} \widetilde{L}^{W(2,2)}(h_{p,r},h_W).
\end{gather*}
Using the same argument in view of Proposition \ref{svi cosing} we see that
\begin{gather*}
\langle v_{(r-i)p}^- \rangle_{W(2,2)} \subseteq \langle v_{ip}^- \rangle_{W(2,2)} \cong \widetilde{L}^{W(2,2)}(h_{p,r}+ip,h_W), \qquad i=1,\ldots,\big\lfloor \frac{r-1}{2}\big\rfloor .
\end{gather*}
Therefore,
\begin{alignat*}{3}
&\langle v_{h_{p,r},h_I} \rangle_{W(2,2)} / \langle v_{rp}^- \rangle_{W(2,2)} \cong L^{W(2,2)}(h_{p,r},h_W), &&& \\
&\langle v_{ip}^- \rangle_{W(2,2)} / \langle v_{(r-i)p}^- \rangle_{W(2,2)} \cong L^{W(2,2)}(h_{p,r} + ip,h_W), \qquad & &i < \frac{r-1}{2},& \\
&\langle v_{ip}^- \rangle_{W(2,2)} \cong L^{W(2,2)}(h_{p,r} + ip,h_W),\qquad &&i \geq \frac{r-1}{2}.&
\end{alignat*}
In this case, $I(-p)^{r-i}v_{h_{p,r},h_I} $ are $W(2,2)$-cosingular vectors in $V^{\HVir}(h_{p,r},h_I)$.

\begin{Example} \label{verma_prim}
Consider $p=1$ case. Singular vector in $V^{\HVir}(h,0) $ is $u'_{1}=\big( L(-1) +\frac{h}{c_{L,I}}I(-1) \big) v_{h,0}$, and $u'_{1}$ generates a copy of $V^{\HVir}(h+1,0) $.

$r=1$: $\Psi \colon V^{W(2,2)}(0,0) \rightarrow V^{\HVir}(0,0) $ maps a singular vector $u'_1 = W(-1) v_{0,0}$ to $0$ and a~cosingular vector $u_1 = L(-1) v_{0,0}$ to $\HVir$-singular vector $v_1^- = L(-1) v_{0,0}$. We get the short exact sequence of $W(2,2)$-modules
\begin{gather*}
0\rightarrow L^{W(2,2)}(0,0) \rightarrow L^{\HVir}(0,0) \rightarrow L^{W(2,2)}(1,0) \rightarrow 0,
\end{gather*}
which is an expansion of (\ref{Psi}) considered as a homomorphism of $W(2,2)$-modules. The rightmost module is generated by a projective image of~$I(-1)v_{0,0}$. Therefore, $L^{\HVir}(c_L, c_{L,I})$ is generated over $W(2,2)$ by $v_{0,0}$ and $I(-1)v_{0,0}$.

$r\in \N$: In general, a cosingular vector $u_{rp}\in V^{W(2,2)}\left( \frac{1-r}{2},0\right) $ maps to a singular vector $v_r^- \in V^{
\HVir}\big( \frac{1-r}{2},0\big)$ of weight ${\frac{1+r}{2}}$.
\begin{gather*}
v_r^- = \prod\limits_{i=0}^{r-1}\left( L(-1) +\frac{1-r+2i}{2c_{L,I}}I(-1) \right) v_{\frac{1-r}{2},0}.
\end{gather*}
\end{Example}

\section[Screening operators and $W(2,2)$-algebra]{Screening operators and $\boldsymbol{W(2,2)}$-algebra}\label{screenings}

We think that the vertex algebra $L^{W(2,2)} (c_L, c_W) $ is a very interesting example of non-rational vertex algebra, which admits similar fusion ring of representations as some ${\mathcal W}$-algebras appearing in LCFT (cf.\ \cite{A1,AdM1,CM,FGST}). Since $\mathcal W$-algebras appearing in LCFT are realized as kernels of screening operators acting on certain modules for Heisenberg vertex algebras, it is natural to ask if $L^{W(2,2)} (c_L, c_W) $ admits similar realization. In~\cite{nas} we embedded the $W(2,2)$-algebra as a subalgebra of the Heisenberg--Virasoro vertex algebra. In this section we shall construct a~screening operator $S_1$ such that the kernel of this operator is exactly $L^{W(2,2)} (c_L, c_W) $.

Let us f\/irst construct a non-semisimple extension of the vertex algebra $L^{\HVir} (c_L, c_{L,I})$. Recall that the Lie algebra $\HVir$ admits the triangular decomposition~(\ref{triangular}). Let $E=\operatorname{span}_{\C} \{ v^{0}, v^{1} \}$ be $2$-dimensional $\HVir^{\ge 0} = \HVir^{0} \oplus \HVir^{+}$-module such that $\HVir ^+ $ acts trivially on $E$ and
\begin{gather*}
L(0) v^{i} = v^{i} , \qquad i=0,1, \qquad I(0) v^1 = v^0, \qquad I(0) v^0 = 0, \\
C_L v^{i} = c_L v^{i}, \qquad C_{L,I} v^{i} = c_{L,I} v^{i}, \qquad C_I v^{i} = 0, \qquad i=1,2.
\end{gather*}
Consider now induced $\HVir$-module
\begin{gather*}
\widetilde E = U(\HVir) \otimes _{U (\HVir ^{\ge 0} )} E.
\end{gather*}
By construction, $\widetilde E$ is a non-split self-extension of the Verma module $V^{\HVir} (1,0)$:
\begin{gather*}
0 \rightarrow V^{\HVir} (1,0) \rightarrow \widetilde E \rightarrow V^{\HVir} (1,0 ) \rightarrow 0.
\end{gather*}
Moreover, $\widetilde E$ is a restricted module for $\HVir$ and therefore it is a module over vertex operator algebra $L^{\HVir} (c_L, c_{L,I})$. Since
\begin{gather*}
\widetilde E \cong E \otimes U(\HVir^{-})
\end{gather*}
as a vector space, the operator $L(0)$ def\/ines a $\Z_{\ge 0}$-gradation on $\widetilde E$.

Note that $ (L(-1) + I(-1) / c_{L,I} ) v_0$ is a singular vector in $\widetilde E$ and it generates the proper submodule. Finally we def\/ine the quotient module
\begin{gather*}
\mathcal U = \frac{ \widetilde E} {U(\HVir) . (L(-1) + I(-1) / c_{L,I} ) v_0}.
\end{gather*}

\begin{Proposition}
$\mathcal U$ is a $\Zp$-graded module for the vertex operator algebra $L^{\HVir} (c_L, c_{L,I} )$:
\begin{gather*}
\mathcal U = \bigoplus _{m \in {\Zp} } \mathcal U (m), \qquad L(0) \vert \mathcal U(m) \equiv (m+1) \operatorname{Id}.
\end{gather*}
The lowest component $\mathcal U (0) \cong E$. Moreover, $\mathcal U$ is a non-split extension of the Verma module $V^{\HVir} (1,0)$ by the simple highest weight module $L^{\HVir} (1,0)$:
 \begin{gather*}
 0 \rightarrow L^{\HVir} (1,0) \rightarrow \mathcal U \rightarrow V^{\HVir} (1,0 ) \rightarrow 0.
 \end{gather*}
 \end{Proposition}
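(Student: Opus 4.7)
The argument splits into four parts: the gradation, identification of the submodule being quotiented, construction of the short exact sequence, and non-splitting.

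For the gradation, the PBW theorem gives $\widetilde{E} \cong U(\HVir^{-}) \otimes E$ as a vector space. Assigning $L(-n), I(-n)$ internal weight $n$ and setting $E$ in weight $1$ yields a $\Z_{\geq 0}$-grading under which $L(0)$ acts as $(m+1)\operatorname{Id}$ on the degree-$m$ piece. A direct computation with the $\HVir$ commutation relations together with $I(0) v^0 = 0$ and $C_I v^0 = 0$ shows that $\xi := (L(-1) + I(-1)/c_{L,I}) v^0$ is $\HVir$-singular of weight $(h,h_I) = (2, 0)$; the key identity being $[L(1), I(-1)] = I(0) - 2 C_{L,I}$. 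Since $\xi$ is homogeneous of degree $1$, the submodule it generates lives in degrees $\geq 1$, so $\mathcal{U}$ inherits the grading and $\mathcal{U}(0) \cong E$.

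Next, $\xi$ lies in the submodule $U(\HVir^{-}) v^0 \subset \widetilde{E}$, which is free over $U(\HVir^{-})$ and hence isomorphic to $V^{\HVir}(1,0)$. Because $\xi$ is singular in $\widetilde{E}$, one has $U(\HVir) \cdot \xi = U(\HVir^{-}) \cdot \xi$, so the submodule generated by $\xi$ in $\widetilde{E}$ agrees with the one it generates inside $V^{\HVir}(1,0)$. By Theorem \ref{billig} applied with $p = 1$ and $h_I = 0 = (1-1) c_{L,I}$, this is the maximal submodule, isomorphic to $V^{\HVir}(2, 0)$. Combining this with the self-extension $0 \to V^{\HVir}(1,0) \to \widetilde{E} \to V^{\HVir}(1,0) \to 0$, the third isomorphism theorem yields
\begin{equation*}
0 \to V^{\HVir}(1,0)/V^{\HVir}(2,0) \to \mathcal{U} \to \widetilde{E}/V^{\HVir}(1,0) \to 0,
\end{equation*}
which is the claimed sequence because $V^{\HVir}(1,0)/V^{\HVir}(2,0) \cong L^{\HVir}(1,0)$ and $\widetilde{E}/V^{\HVir}(1,0) \cong V^{\HVir}(1,0)$.

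For non-splitting, I would suppose toward contradiction that there is a section $M \cong V^{\HVir}(1,0) \hookrightarrow \mathcal{U}$. Its highest-weight vector lies in $\mathcal{U}(0) = \operatorname{span}\{v^0, v^1\}$ and has the form $\alpha v^0 + \beta v^1$. Because $M \cap L^{\HVir}(1,0) = 0$ and $L^{\HVir}(1,0) \cap \mathcal{U}(0) = \C v^0$, one needs $\beta \neq 0$; but then $I(0)(\alpha v^0 + \beta v^1) = \beta v^0 \neq 0$, contradicting the highest-weight condition $h_I = 0$. I expect this last step to be the main point of care, since it depends on pinning down the degree-$0$ component of $L^{\HVir}(1,0) \subset \mathcal{U}$ and exploiting the nondiagonal action of $I(0)$ on $E$.
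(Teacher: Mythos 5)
Your proposal is correct and follows essentially the same route as the paper: identify $U(\HVir)\cdot v^0\subset\widetilde E$ as a Verma module containing the singular vector $\xi$, observe that quotienting by $U(\HVir)\cdot\xi$ (the maximal submodule, by Billig's theorem) turns this copy into $L^{\HVir}(1,0)$ while the quotient by it remains $V^{\HVir}(1,0)$, and derive non-splitting from $I(0)v^1=v^0\neq 0$. You merely spell out details the paper leaves implicit (the singularity check for $\xi$, the isomorphism-theorem bookkeeping, and the weight-space argument for non-splitting), all of which are sound.
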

 \begin{proof}
By construction $\mathcal U $ is a graded quotient of a $\Z_{\ge 0}$-graded $L^{\HVir}(c_L, c_{L,I})$-module $\widetilde E$. The lowest component is $\mathcal U (0) \cong E$. Submodule $U(\HVir).v^0$ is isomorphic to $L^{\HVir}(1,0)$, and the projective image of $v^1$ generates the Verma module $V^{\HVir}(1,0)$ since $I(0)v^1=v^0$. For the same reason, this exact sequence does not split.
\end{proof}

Now we consider $L^{\HVir}(c_L, c_{L,I} )$-module
\begin{gather*}
\mathcal V^{\rm ext}:= L^{\HVir}(c_L, c_{L,I} ) \oplus \mathcal U.
\end{gather*}
By using \cite[Theorem 4.8.1]{LL} (see also \cite{AdM-selecta,Li1}) we have that $ \mathcal V^{\rm ext}$ has the structure of a~vertex operator algebra with vertex operator map $Y_{\rm ext}$ def\/ined as follows:{\samepage
\begin{gather*}
Y_{\rm ext} ( a_1 + w_1 , z ) (a_2 + w_2) = Y(a_1, z) (a_2+w_2) + e^{z L(-1)} Y (a_2 ,-z ) w_1,
\end{gather*}
where $a_1, a_2 \in L^{\HVir}(c_L, c_{L,I} )$, $ w_1, w_2 \in \mathcal U$.}

Take now $v^ i \in E \subset \mathcal U$, $i=0,1$ as above and def\/ine
\begin{gather*}
S_i (z) = Y_{\rm ext} \big(v^i, z\big) = \sum _{n \in \Z} S_i (n) z^{-n-1}.
\end{gather*}
By construction
\begin{gather*}
S_1 (z) \in \operatorname{End} \big( L^{\HVir} (c_L,c_{L,I}), L^{\HVir}(1,0) \big) ((z)).
\end{gather*}

\begin{Proposition}
For all $n,m\in \Z$ we have:
\begin{gather*}
[L(n), S_i (m)] = - m S_i (n+m), \qquad i=0,1,\\
[W(n), S_0 (m) ] = 0, \qquad [W(n), S_1(m) ] = 2 m c_{L,I} S_0( n+m).
\end{gather*}
In particular, $S_0 (0) $ and $S_1(0)$ are screening operators. Moreover,
\begin{gather*}
S_1= S_1(0) \colon \ L^{\HVir} (c_L, c_{L,I}) \rightarrow L^{\HVir} (1,0)
\end{gather*}
is nontrivial and $ S_1 (0) I(-1) {\bf 1} = - v_0. $
\end{Proposition}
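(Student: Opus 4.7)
The plan is to use the Borcherds commutator formula in the vertex operator algebra $\mathcal{V}^{\rm ext}$, applied to the pair $(W,v^i)$ or $(\omega,v^i)$. The key input is the $\HVir^{\ge 0}$-action on $E$: since $\HVir^+$ annihilates $v^0,v^1$ and $L(0)v^i=v^i$, each $v^i$ is a Virasoro primary of conformal weight $1$, i.e.\ $L(n)v^i=0$ for $n\ge 1$ and $L(0)v^i=v^i$.

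For the Virasoro relation, I would apply the standard commutator of a weight-one primary with $L(n)$, namely
\begin{gather*}
[L(n),Y_{\rm ext}(v^i,z)] = z^{n+1}\partial_z Y_{\rm ext}(v^i,z) + (n+1)z^n Y_{\rm ext}(v^i,z),
\end{gather*}
and read off the coefficient of $z^{-m-1}$ on both sides to get $[L(n),S_i(m)] = -m\,S_i(n+m)$.

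The main step is the relation with $W(n)$. The Borcherds identity in $\mathcal{V}^{\rm ext}$ gives
\begin{gather*}
[W(n), S_i(m)] = \sum_{k\ge 0}\binom{n+1}{k}\bigl(W(k-1)v^i\bigr)_{(n+m+1-k)},
\end{gather*}
where $(\cdot)_{(l)}$ denotes the mode in the convention $Y_{\rm ext}(u,z)=\sum u_{(l)}z^{-l-1}$. So I must compute $W(k-1)v^i$ for $k\ge 0$, with $W=(I(-1)^2+2c_{L,I}I(-2))\mathbf{1}$. A direct mode-expansion using $I(n)v^0=0$ for $n\ge 0$, $I(0)v^1=v^0$, $I(n)v^1=0$ for $n\ge 1$, and $[I(m),I(n)]=0$, yields $W(k-1)v^0=0$ for all $k\ge 0$ (so $[W(n),S_0(m)]=0$ is immediate), together with
\begin{gather*}
W(-1)v^1 = 2I(-1)v^0,\qquad W(0)v^1 = -2c_{L,I}v^0,\qquad W(k-1)v^1=0 \text{ for } k\ge 2.
\end{gather*}
The hard part is identifying $(I(-1)v^0)_{(l)}$. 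Here the defining relation of $\mathcal{U}$, namely $(L(-1)+I(-1)/c_{L,I})v^0=0$, gives $I(-1)v^0 = -c_{L,I}L(-1)v^0$; by the translation covariance axiom $Y_{\rm ext}(L(-1)u,z)=\partial_z Y_{\rm ext}(u,z)$, this implies $(I(-1)v^0)_{(l)} = c_{L,I}\,l\,S_0(l-1)$. Substituting the $k=0$ and $k=1$ contributions and using $\binom{n+1}{0}=1$, $\binom{n+1}{1}=n+1$, the two terms combine into
\begin{gather*}
2c_{L,I}(n+m+1)S_0(n+m) - 2(n+1)c_{L,I}S_0(n+m) = 2mc_{L,I}S_0(n+m),
\end{gather*}
which is exactly the asserted relation. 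In particular $[W(n),S_0(n+m)]=0$ and $S_0(0)$, $S_1(0)$ commute with every Virasoro and $W$-mode, so they are screenings.

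For the final computation, I would use the extension formula to write
\begin{gather*}
Y_{\rm ext}(v^1,z)\bigl(I(-1)\mathbf{1}\bigr) = e^{zL(-1)}Y\bigl(I(-1)\mathbf{1},-z\bigr)v^1 = e^{zL(-1)}I(-z)v^1,
\end{gather*}
and expand $I(-z)v^1 = -v^0 z^{-1} + \sum_{n\le -1}(-1)^{n+1}I(n)v^1\,z^{-n-1}$. All terms in the sum carry nonnegative powers of $z$ after multiplication by $e^{zL(-1)}$, so only the $-v^0z^{-1}$ piece contributes to the coefficient of $z^{-1}$ (i.e.\ to $S_1(0)$), giving $S_1(0)I(-1)\mathbf{1}=-v^0$. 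Since $v^0\neq 0$ in $\mathcal{U}(0)\cong E$, this shows $S_1(0)$ is nontrivial and completes the proof.
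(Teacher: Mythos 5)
Your proposal is correct and follows essentially the same route as the paper: compute $L(k)v^i$ and $W(k)v^i$ for $k\ge -1$ (using $W(-1)v^1=2I(-1)v^0=-2c_{L,I}L(-1)v^0$, $W(0)v^1=-2c_{L,I}v^0$ and the defining relation of $\mathcal U$), feed these into the commutator formula, and evaluate $S_1(0)I(-1)\mathbf 1$ as a residue of $e^{zL(-1)}Y(I(-1)\mathbf 1,-z)v^1$. You merely make explicit the binomial bookkeeping and the translation-covariance step $(L(-1)u)_{(l)}=-l\,u_{(l-1)}$ that the paper leaves implicit.
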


\begin{proof} Since $L(k) v^{i} = \delta_{k,0} v^{i}$ for $k \ge 0$, commutator formula gives that
\begin{gather*}
[L(n), S_i(m)] = - m S_i (n+m).
\end{gather*}
Next we calculate $[W(n), S_1 (m)]$. We have
\begin{gather*}
W(-1) v^1 = 2 I(-1) v^0 = - 2 c_{L,I} L(-1) v^0, \\
W(0) v^1 = - 2 c_{L,I} v^ 0, \qquad W(n) v^1 = 0, \qquad n \ge 0.
\end{gather*}
This implies that
\begin{gather*}
[W(n), S_1 (m) ] = 2 c_{L,I} m S_0 (n+m).
\end{gather*}
Since $W(n) v^0 = 0$ for $n \ge -1$ we get
\begin{gather*}
[ W(n), S_0 (m) ] = 0.
\end{gather*}
Therefore we have proved that $S_i(0)$, $i= 0,1$ are screening operators. Next we have
\begin{gather*}
S_1(0) I(-1) {\bf 1} = \operatorname{Res}_z Y_{\rm ext}\big(v^1, z\big) I(-1) = \operatorname{Res}_z e ^{z L(-1)} Y( I(-1) {\bf 1}
,- z) v^1 = - v_0.
\end{gather*}
The proof follows.
\end{proof}

\begin{Theorem}
$S_1$ is a derivation of the vertex algebra $\mathcal V^{\rm ext}$ and we have
\begin{gather*}
\Ker_{ L^{\HVir} (c_L,c_{L,I}) } S_1 \cong L ^{W(2,2)} (c_L, c_W ).
\end{gather*}
\end{Theorem}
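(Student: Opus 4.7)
The proof naturally splits into three steps: (A)~$S_1$ is a derivation of $\mathcal V^{\rm ext}$; (B)~$\Psi\bigl(L^{W(2,2)}(c_L,c_W)\bigr)\subseteq \Ker S_1$; (C)~the inclusion is an equality. Step (A) is a general vertex-algebra fact: the Borcherds commutator identity $[u_0,Y(a,z)]=Y(u_0 a,z)$ shows that the zero mode of any element $u$ in a vertex algebra is a derivation, so $S_1=v^1_0$ is automatically a derivation of $\mathcal V^{\rm ext}$.

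For step (B), first compute
\begin{gather*}
S_1 \mathbf 1 = \operatorname{Res}_z Y_{\rm ext}\bigl(v^1,z\bigr)\mathbf 1 = \operatorname{Res}_z e^{zL(-1)} v^1 = 0,
\end{gather*}
since $e^{zL(-1)}v^1$ has only non-negative powers of $z$. Combined with the commutation relations $[L(n),S_1]=0$ and $[W(n),S_1]=0$ from the preceding proposition, this gives $S_1\omega = L(-2)S_1\mathbf 1 = 0$ and $S_1 W = W(-2)S_1\mathbf 1 = 0$. The image $\Psi\bigl(L^{W(2,2)}(c_L,c_W)\bigr)$ is spanned by vectors of the form $L(-n_1)\cdots L(-n_t)W(-m_1)\cdots W(-m_s)\mathbf 1$, and since $S_1$ commutes past each mode to land on $S_1\mathbf 1 = 0$, every such vector lies in $\Ker S_1$, proving the inclusion.

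For step (C), the same commutation relations make $S_1$ a homomorphism of $W(2,2)$-modules $L^{\HVir}(c_L,c_{L,I})\to L^{\HVir}(1,0)$. Since $(1,0)$ is typical for $\HVir$, Corollary~\ref{L za -}(i) gives $L^{\HVir}(1,0)\cong_{W(2,2)} L^{W(2,2)}(1,0)$, which is irreducible. The preceding proposition records $S_1 I(-1)\mathbf 1 = -v^0\neq 0$, so $\Ker S_1$ is a proper $W(2,2)$-submodule of $L^{\HVir}(c_L,c_{L,I}) = L^{\HVir}(0,0)$. Because $(0,0) = (h_{1,1},(1-1)c_{L,I}) \in {\mathcal {AT}}_{\HVir}(c_L,c_{L,I})$, Corollary~\ref{L za -}(ii) with $p=r=1$ realizes $L^{\HVir}(c_L,c_{L,I})$ as a non-split extension
\begin{gather*}
0\to L^{W(2,2)}(c_L,c_W) \to L^{\HVir}(c_L,c_{L,I}) \to L^{W(2,2)}(1,0) \to 0
\end{gather*}
of two non-isomorphic simple $W(2,2)$-modules. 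A standard argument then shows that such an extension admits only $0$, the socle $\Psi\bigl(L^{W(2,2)}(c_L,c_W)\bigr)$, and the whole module as $W(2,2)$-submodules; hence the proper, nonzero submodule $\Ker S_1$ must equal the socle, giving $\Ker S_1\cong L^{W(2,2)}(c_L,c_W)$.

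The main obstacle is step (C), which depends critically on Corollary~\ref{L za -}: both the irreducibility of $L^{\HVir}(1,0)$ as a $W(2,2)$-module and the non-split extension structure of $L^{\HVir}(c_L,c_{L,I})$ were non-trivial results established earlier in the paper. Once these are in hand, the final equality is a formal property of non-split extensions of non-isomorphic simple modules.
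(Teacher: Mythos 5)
Your proof is correct and follows essentially the same route as the paper: $S_1$ is a derivation because it is the residue of $Y_{\rm ext}(v^1,z)$, it kills $\omega$ and $W$ (hence all of $\Psi(L^{W(2,2)}(c_L,c_W))$) but not $I(-1)\mathbf 1$, and the length-two $W(2,2)$-structure of $L^{\HVir}(c_L,c_{L,I})$ forces the kernel to be the socle. The only cosmetic difference is that the paper invokes Example~\ref{verma_prim} (generation by the singular vector $\mathbf 1$ and the cosingular vector $I(-1)\mathbf 1$) where you invoke the submodule lattice of the non-split extension from Corollary~\ref{L za -}(ii); these are the same structural fact.
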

\begin{proof}
By construction $S_1 = \operatorname{Res}_z Y_{\rm ext} (v^1, z)$, so $S_1$ is a derivation so $\overline W = \Ker_{ L^{\HVir} (c_L,c_{L,I}) } S_1$ is a vertex subalgebra of $L^{\HVir} (c_L,c_{L,I})$. Since
\begin{gather*}
S_1 L(-2) {\bf 1} = S_1 W(-2) {\bf 1} = 0
\end{gather*}
we have that $L ^{W(2,2)} (c_L, c_W ) \subset \overline W$. Since $S_1 I (-1) {\bf 1} \ne 0$, we have that $I(-1) {\bf 1}$ does not belong to $\overline W$. By using the fact that $L^{\HVir} (c_L,c_{L,I})$ is as $W(2,2)$-module generated by singular vector $\bf{1}$ and cosingular vector $I(-1) {\bf 1}$ (see Example~\ref{verma_prim}) we get that $\overline W = L ^{W(2,2)} (c_L, c_W )$. The proof follows.
\end{proof}

\begin{Remark}
Of course, every $\mathcal V_{\rm ext}$-module becomes a $W(2,2)$-module with screening opera\-tor~$S_1$. Similar statement holds for intertwining operators. Constructions of such modules and intertwining operators require dif\/ferent techniques which we will present in our forthcoming paper \cite{AR2}.
\end{Remark}

\subsection*{Acknowledgements}
The authors are partially supported by the Croatian Science Foundation under the project 2634 and by the Croatian Scientif\/ic Centre of Excellence QuantixLie.

\pdfbookmark[1]{References}{ref}
\LastPageEnding

\end{document}